\theoremstyle{plain}
\newtheorem{theorem}{Theorem}[section]
\newtheorem{lemma}[theorem]{Lemma}
\newtheorem{proposition}[theorem]{Proposition}
\newtheorem{conjecture}[theorem]{Conjecture}
\theoremstyle{definition}
\theoremstyle{remark}
\newtheorem{example}[theorem]{Example}
\journal{Journal of Algebra}
\begin{document}

\pagenumbering{roman}
\include{titlepage}
\setcounter{page}{2}
\include{contents}
\onehalfspacing
\pagenumbering{arabic}
\onehalfspacing

\newcommand{\B}[1]{\mathbb{#1}}
\newcommand{\BF}[1]{\mathbf{#1}}
\newcommand{\twopartdef}[4]{\left\{
		\begin{array}{ll}
			#1 & \mbox{if } #2 \\
			#3 & \mbox{if } #4
		\end{array}
	\right.}

\begin{frontmatter}

\title{$K_2$ of Kac-Moody Groups}

\author{Matthew Westaway}
\address{Mathematics Institute, University of Warwick, Coventry, CV4 7AL, United Kingdom}

\begin{abstract}
	Ulf Rehmann and Jun Morita, in their 1989 paper \emph{A Matsumoto Type Theorem for Kac-Moody Groups}, gave a presentation of 
	$K_2(A,F)$ for any generalised Cartan matrix $A$ and field $F$. The purpose of this paper is to use this presentation to compute $K_2(A,F)$ more explicitly in the case when $A$ is hyperbolic. In particular, we shall show that these $K_2(A,F)$ can always be expressed as a product of quotients of $K_2(F)$ and $K_2(2,F)$. Along the way, we shall also prove a similar result in the case when $A$ has an odd entry in each column.
\end{abstract}

\begin{keyword}
	Kac-Moody Groups \sep Matsumoto's Theorem \sep $K_2(A,F)$ \sep K-Theory
	\MSC[2010] 19C20\sep  17B67
\end{keyword}

\end{frontmatter}

\section{Introduction}

In 1971, John Milnor introduced the functor $K_2$, which assigns to each field $F$ an abelian group $K_2(F)$. The definition was straightforward: for $n\geq 3$, $K_2(n,F)$ was simply the kernel of the natural homomorphism from the Steinberg group $St(n,F)$ (cf. \cite{IAKT}) into $GL(n,F)$, and then $K_2(F)$ was defined as the direct limit of the $K_2(n,F)$. In fact, Milnor further proved that $K_2(n,F)$ was isomorphic to $K_2(F)$ for all $n\geq 3$.

While Milnor did not deal with the $n=2$ case, a small change to the definition of the Steinberg group in this case yields an analogous group $K_2(2,F)$ (cf. \cite{DVR}). So for each field $F$ we have two groups of interest: $K_2(F)$ and $K_2(2,F)$. In \cite{MAT}, Matsumoto was able to give presentations for both of these groups in terms of generators and relations, using ideas from root systems.

Once Tits introduced the idea of a Kac-Moody Group in \cite{UPKMG}, a generalisation of $K_2$ was born, one which was related to the root systems of arbitrary Kac-Moody algebras as opposed to just the finite ones which the original $K_2$ had been (cf. \S 6, \S 7 in \cite{SLOCG}, \S 1 in \cite{AMTTFKMG}). As a natural progression, Rehmann and Morita in \cite{AMTTFKMG} similarly generalised Matsumoto's seminal theorem in \cite{MAT} to give a presentation of this new $K_2(A,F)$ for \emph{all} Kac-Moody Lie algebras $A$ and fields $F$.

This presentation is similar to Matsumoto's, but due to the greater generality of the root systems involved it is inherently more complicated. As a result, what $K_2(A,F)$ actually is for a given Generalised Cartan Matrix (GCM) $A$ and field $F$ is more difficult to determine than we would ideally like. Hence, in this paper we aim to give an alternate presentation for $K_2(A,F)$ for the hyperbolic GCMs, which should be easier to work with. In particular, it reduces to understanding the structure of the groups $K_2(F)$ and $K_2(2,F)$, a subject in which a fair amount of work has already been done.

This paper shall start by introducing the key definitions, theorems and notation which shall be used throughout. Next, in Section 3, we will explicate some of aspects of Rehmann and Morita's paper in more detail, in order to give us some more tools to work with, and we will use these to derive some straightforward results in Section 4. In Section 5, we shall give a general result which holds for all GCMs of a certain form which appears very frequently for hyperbolic GCMS. Then, we start computing $K_2(A,F)$ for the $2\times 2$ hyperbolic GCMs (in Section 6), and then for the $3\times 3$ matrices (in Section 7) and so on (concluding with Section 8), until all the hyperbolic cases are covered.

\section{Preliminaries}

An indecomposable \emph{Generalised Cartan Matrix} (GCM) is called hyperbolic if it is not finite or affine, but any proper principal minor of the matrix is finite or affine (cf.\cite{IDLA}). In this paper, out goal is to compute $K_2(A,F)$ for any hyperbolic GCM $A$ and field $F$.

We start by recalling the appropriate presentations given by Matsumoto \cite{MAT} and Rehmann and Morita \cite{AMTTFKMG}.

\begin{theorem}
	
If $A$ is a finite GCM (i.e. a Cartan matrix) and $F$ is a field, then 

$$K_2(A,F) = \twopartdef { K_2 (F) } {A\neq C_n (n\geq 1)} {K_2(2,F)} {A=C_n (n\geq 1)}$$

where $C_n$ is used as in the standard notation for the finite-dimensional simple Lie algebras (with $C_1=A_1$ and $C_2=B_2$). Furthermore, these groups have the following presentations.

	$K_2(F)$ is the abelian group generated by the symbols $\{u,v\}$ for $u,v\in F^{*}$ with defining relations:
	
	(A1) $\{tu,v\}=\{t,v\}\{u,v\}$
	
	(A2) $\{t,uv\}=\{t,u\}\{t,v\}$
	
	(A3) $\{u,1-u\}=1$ if $u\neq 1$
	
	for all $t,u,v\in F^{*}$. We call $\{.,.\}$ satisfying these relations a \emph{Steinberg symbol}.
	
	Also, $K_2(2,F)$ is the abelian group generated by the symbols $\{u,v\}$ for $u,v\in F^{*}$ with defining relations:
	
	(B1) $\{t,u\}\{tu,v\}=\{t,uv\}\{u,v\}$
	
	(B2) $\{1,1\}=1$
	
	(B3) $\{u,v\}=\{u^{-1},v^{-1}\}$
	
	(B4) $\{u,v\}=\{u,(1-u)v\}$ if $u\neq 1$
	
	for all $t,u,v\in F^{*}$. We call $\{.,.\}$ satisfying these relations a \emph{Steinberg cocycle}.
\end{theorem}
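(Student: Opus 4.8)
The plan is to recognise this statement as a restatement, in the language of Rehmann and Morita, of Matsumoto's classical computation of the fundamental group of a simply connected Chevalley group, together with his presentations of the resulting symbol groups; both the case distinction and the explicit relations (A1)--(A3) and (B1)--(B4) are his. The proof I would give therefore proceeds in two stages. First, identify $K_2(A,F)$ for finite $A$ with the kernel of the natural map from the Steinberg group $St(A,F)$ onto the associated Chevalley group $G_A(F)$ (equivalently, the centre of its universal central extension). Second, invoke Matsumoto's theorem to read off both the isomorphism type and the presentation of this kernel. Since the whole paper is built on the Rehmann--Morita presentation of $K_2(A,F)$ for arbitrary $A$, an attractive alternative is to specialise that presentation to finite $A$ and check that its relations collapse to Matsumoto's lists; I would use whichever of these two routes is cleaner to write down.

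The mechanism I would make explicit is the following. Working inside the Steinberg group, set $w_\alpha(u)=x_\alpha(u)x_{-\alpha}(-u^{-1})x_\alpha(u)$ and $h_\alpha(u)=w_\alpha(u)w_\alpha(1)^{-1}$ for each root $\alpha$ and each $u\in F^{*}$, and consider the elements $c_\alpha(u,v)=h_\alpha(u)h_\alpha(v)h_\alpha(uv)^{-1}$. The first key step is to show that these $c_\alpha(u,v)$ lie in the kernel and in fact generate it, so that $K_2(A,F)$ is a homomorphic image of the abstract group presented on the symbols $\{u,v\}$. The second key step is to extract the defining relations: these arise from the Chevalley commutator formulae restricted to the rank-two subsystems of the root system, and it is precisely these rank-two computations (for $A_1\times A_1$, $A_2$, $B_2=C_2$ and $G_2$) that produce the bilinearity and Steinberg relations (A1)--(A3) in the generic case and the weaker cocycle relations (B1)--(B4) in the symplectic case.

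The dichotomy between $K_2(F)$ and $K_2(2,F)$ is then a root-theoretic statement that I would isolate as the heart of the argument. Whenever the long roots span a connected simply laced subsystem of rank at least two, the symbols $c_\alpha(u,v)$ attached to those roots are forced, through that subsystem, to be bimultiplicative and to satisfy the Steinberg relation, giving Matsumoto's presentation of $K_2(F)$ via (A1)--(A3); this covers $A_n$, $D_n$, $E_n$, as well as $B_n$, $F_4$ and $G_2$, whose long roots already span such a configuration. The exceptional family is type $C_n$: here the long roots $\pm 2e_i$ form a totally disconnected subsystem $A_1\times\cdots\times A_1$, so no rank-two simply laced configuration is available to force bilinearity, and the long-root symbol satisfies only the Steinberg-cocycle relations (B1)--(B4), yielding $K_2(2,F)$; the rank-one case $A_1=C_1$ is the base instance of this phenomenon. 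I expect the main obstacle to be the \emph{completeness} half of the argument: verifying the relations and the generation statement only gives a surjection from the presented group onto $K_2(A,F)$, and to invert it one must construct, for each field $F$, an explicit central extension of $G_A(F)$ realising the universal symbol, so that Matsumoto's universality forces the reverse surjection. Separating the genuinely symplectic relations coming from the $B_2=C_2$ subsystem from those forced by the presence of a simply laced $A_2$ is the delicate point on which the whole case distinction turns.
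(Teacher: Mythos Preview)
The paper does not prove this theorem at all: it is Theorem~2.1 in the Preliminaries, stated as a known result and attributed to Matsumoto \cite{MAT}, with the presentations (A1)--(A3) and (B1)--(B4) simply quoted. The only place the paper revisits the finite case is Example~4.3, where the case distinction $K_2(A,F)\in\{K_2(F),K_2(2,F)\}$ is re-derived by taking the Rehmann--Morita presentation (Theorem~2.2) and the alternative form (Theorem~3.1) as given, and then iteratively applying Proposition~4.2 (deletion of a column whose only non-zero off-diagonal entry is $-1$) to strip a finite Cartan matrix down to rank one. This never touches the completeness of the relations (A1)--(A3) or (B1)--(B4); those are imported wholesale from Matsumoto.

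Your proposal is therefore correct but genuinely different in scope and method. You are sketching Matsumoto's original argument --- generation of the kernel by the $c_\alpha(u,v)$, extraction of relations from rank-two subsystems, and the root-theoretic dichotomy via the long-root subsystem --- which is exactly what one would do to \emph{prove} the theorem from scratch. The paper, by contrast, assumes Matsumoto and only later checks consistency of the case distinction against its own machinery. You even flag the alternative route (specialising Rehmann--Morita to finite $A$), which is closer to what the paper actually does in Example~4.3, though the paper's version is far less detailed than what you outline. In short: your approach is the honest proof; the paper's is a citation plus a sanity check.
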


\begin{theorem}
	For a GCM $A$ and a field $F$, $K_2(A,F)$ is the abelian group generated by $c_i(u,v)$ for $i=1,\ldots,n$ and $u,v\in F^{*}$ with defining relations:
	
	(L1) $c_i(t,u)c_i(tu,v)=c_i(t,uv)c_i(u,v)$
	
	(L2) $c_i(1,1)=1$ 
	
	(L3) $c_i(u,v)=c_i(u^{-1},v^{-1})$
	
	(L4) $c_i(u,v)=c_i(u,(1-u)v)$ (if $u\neq 1$)
	
	(L5) $c_i(u,v^{a_{ji}})=c_j(u^{a_{ij}},v)$
	
	(L6) $c_i(tu,v^{a_{ji}})=c_i(t,v^{a_{ji}})c_i(u,v^{a_{ji}})$
	
	(L7) $c_i(t^{a_{ji}},uv)=c_i(t^{a_{ji}},u)c_i(t^{a_{ji}},v)$ 
	
	for all $t,u,v\in F^{*}$ and $1\leq i\neq j\leq n$, and where $a_{ij}$ is the ij-th entry of the GCM $A$.
	
\end{theorem}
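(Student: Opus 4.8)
This is precisely the presentation obtained by Rehmann and Morita in \cite{AMTTFKMG}, so one legitimate route is simply to cite it; for a self-contained argument I would adapt Matsumoto's method from \cite{MAT} to the Kac-Moody setting. The plan is to work inside the Steinberg group $St(A,F)$, generated by the real root elements $x_\alpha(t)$ subject to the Chevalley commutator relations, with $K_2(A,F)$ the central kernel of the canonical map $St(A,F)\to G(A,F)$ onto the Kac-Moody group (cf. \cite{SLOCG,AMTTFKMG}). For each simple index $i$ set
$$w_i(u)=x_{\alpha_i}(u)\,x_{-\alpha_i}(-u^{-1})\,x_{\alpha_i}(u),\qquad h_i(u)=w_i(u)w_i(1)^{-1},\qquad c_i(u,v)=h_i(u)h_i(v)h_i(uv)^{-1}.$$
Each $c_i(u,v)$ lies in $K_2(A,F)$, and the task is to show that these symbols generate the group and that (L1)--(L7) generate all the relations among them.

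First I would prove generation: the kernel of $St(A,F)\to G(A,F)$ sits inside the preimage of the standard torus, which is generated by the $h_i(u)$, and the obstruction to $u\mapsto h_i(u)$ being multiplicative is exactly $c_i(u,v)$. This step is essentially formal once one has the Bruhat-type structure of $St(A,F)$ coming from Tits' construction \cite{UPKMG}. I would then read off the single-index relations by computing inside the rank-one subgroup generated by $x_{\pm\alpha_i}$: (L1) and (L2) are the $2$-cocycle identity and the normalisation, automatic from centrality; (L3) is the symmetry of the symbol, a standard consequence of $w_i(u)^{-1}=w_i(-u)$; and (L4) is the Kac-Moody form of Steinberg's relation $\{u,1-u\}=1$. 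These are exactly Matsumoto's rank-one computations, identical to those already packaged in Theorem 2.1.

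The rank-two relations (L5)--(L7) are the genuinely new ingredient. They are extracted from the subgroup generated by $x_{\pm\alpha_i}$ and $x_{\pm\alpha_j}$, whose commutator relations are governed by the $2\times 2$ submatrix $\left(\begin{smallmatrix}2 & a_{ij}\\ a_{ji} & 2\end{smallmatrix}\right)$; the exponents $a_{ij},a_{ji}$ enter through the pairing recording how the coroot of $\alpha_i$ acts on $\alpha_j$. To prove that (L1)--(L7) are complete I would follow Matsumoto and form the abstract abelian group $C$ presented by these relations, then construct a central extension $1\to C\to \tilde G\to G(A,F)\to 1$ by lifting each $x_\alpha(t)$ and imposing the Steinberg relations twisted by explicit correction factors valued in $C$. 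Verifying that these correction factors are consistent — that every lifted relation closes up — is exactly where (L1)--(L7) get used, and a universality argument then forces $C\cong K_2(A,F)$, which is the assertion.

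The main obstacle is precisely this cocycle-consistency check. In the finite case Matsumoto can enumerate the roots, but here both the root system and the Weyl group are infinite, so I would reduce every relation to one living in a rank-one or rank-two standard subgroup, using that any pair of real roots spans such a subsystem. Consistency would then need to be verified only on the finitely many $2\times 2$ minors of $A$, where the finite computations behind Theorem 2.1 apply. The delicate points I expect to fight with are controlling the imaginary roots, which carry no root subgroups yet still constrain the structure, and confirming that this reduction to $2\times 2$ minors genuinely exhausts all the relations of the infinite-rank Steinberg group.
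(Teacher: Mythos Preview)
The paper does not prove this theorem at all: it is stated in the Preliminaries section as a result quoted from Rehmann and Morita \cite{AMTTFKMG}, with no accompanying argument. Your very first sentence --- ``one legitimate route is simply to cite it'' --- is therefore exactly what the paper does, and nothing more is required.

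The remainder of your proposal is a reasonable sketch of how Rehmann and Morita themselves establish the presentation (define $c_i(u,v)$ via $h_i(u)h_i(v)h_i(uv)^{-1}$ inside the Steinberg--type group, verify the relations in rank-one and rank-two subgroups, then build a universal central extension to show completeness). As an outline of the original proof it is broadly accurate, though the honest caveats you raise at the end --- handling the infinite root system and the imaginary roots --- are exactly where the real work lies, and your sketch does not resolve them. For the purposes of this paper, however, that sketch is extraneous: the theorem is background, and citing \cite{AMTTFKMG} suffices.
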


Note: We shall be using Carter's and Kac's notation for GCMs (cf. \cite{LAFAT} and \cite{IDLA}), where a fixed simple root corresponds to a column of the matrix. Some authors instead have simple roots corresponding to the rows of the matrix, in which case all the same results shall hold but will require the reader to transpose the appropriate definitions and theorems.

\section{An Alternative Way of Presenting $K_2(A,F)$}

In their paper Rehmann and Morita give, without proof, the following theorem.

\begin{theorem}\label{RMThm}
Let $A=(a_{ij})$ be an $n\times n$ GCM and $F$ a field. For each $i=1,\ldots,n$ define

$$L_i = \twopartdef { K_2 (F) } {a_{ki}\, \mbox{is odd for some}\,\, 1\leq k\leq n} {K_2(2,F)} {a_{ki}\, \mbox{is even for all}\,\, 1\leq k\leq n}$$

and define $J$ to be the subgroup of $L_1\times L_2\times\ldots\times L_n$ generated by $\{u,v^{a_{ji}}\}_i\{u^{a_{ij}},v\}_j^{-1}$ for all $u,v\in F^{*}$ and $1\leq i\neq j\leq n$, where $\{\cdot,\cdot\}_i$ is the Steinberg symbol/cocycle for $L_i$. Then

$$K_2(A,F)=\dfrac{L_1\times L_2\times\ldots\times L_n}{J}$$
\end{theorem}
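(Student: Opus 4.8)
The plan is to prove the stated isomorphism by constructing a pair of mutually inverse homomorphisms between $K_2(A,F)$, taken in its presentation by generators $c_i(u,v)$ and relations (L1)--(L7), and the group $G:=(L_1\times\cdots\times L_n)/J$. In one direction I would define $\phi\colon L_1\times\cdots\times L_n\to K_2(A,F)$ by sending the generator $\{u,v\}_i$ of the $i$-th factor to $c_i(u,v)$; in the other I would define $\psi\colon K_2(A,F)\to G$ by $c_i(u,v)\mapsto[\{u,v\}_i]$. On generators these are visibly inverse to one another, so the entire proof reduces to checking that each assignment respects the defining relations of its source, after which the isomorphism follows formally.

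First I would dispatch the routine verifications. For $\phi$ to be well defined the $c_i(u,v)$ must satisfy the defining relations of $L_i$: when $L_i=K_2(2,F)$ these are exactly (B1)--(B4), which coincide with (L1)--(L4); when $L_i=K_2(F)$ they are (A1)--(A3), of which (A3) is immediate from (L4) together with the normalisation $c_i(u,1)=1$ (itself a consequence of (L1)--(L2)). Dually, for $\psi$ the relations (L1)--(L4) must hold in $L_i$: this is clear when $L_i=K_2(2,F)$, and when $L_i=K_2(F)$ it is the elementary observation that every Steinberg symbol is a Steinberg cocycle, i.e.\ (B1)--(B4) follow directly from (A1)--(A3). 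Relation (L5) causes no trouble in either direction: the generators of $J$ are precisely the elements $\{u,v^{a_{ji}}\}_i\{u^{a_{ij}},v\}_j^{-1}$, so $\phi$ sends each to $c_i(u,v^{a_{ji}})c_j(u^{a_{ij}},v)^{-1}=1$ by (L5), whence $\phi(J)=1$ and $\phi$ will descend to $\bar\phi\colon G\to K_2(A,F)$; symmetrically, $\psi$ automatically respects (L5) because the offending ratio lies in $J$ and so dies in the quotient.

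The real work lies in (L6) and (L7), where the parity condition defining $L_i$ enters: for $\psi$ I need (L6) and (L7) to hold among the images in $L_i$, and for $\phi$ I need, in the case $L_i=K_2(F)$, the bimultiplicativity relations (A1)--(A2). Both hinge on two facts about a Steinberg cocycle $c$, which I would establish from (B1)--(B4). The first is multiplicativity on squares, $c(tu,w^2)=c(t,w^2)c(u,w^2)$ and symmetrically in the other variable. A clean entry point is the identity extracted from (B1),
\[
c(t,w)c(u,w)c(tu,w)^{-1}=c(t,u)c(t,w)c(t,uw)^{-1},
\]
which shows that the failure of multiplicativity in the first variable (at second argument $w$) equals the failure of multiplicativity of $c(t,\cdot)$; hence the two one-sided square-multiplicativity statements are equivalent, and it suffices to prove one. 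Since every $a_{ji}$ is even when $L_i=K_2(2,F)$, the second arguments $v^{a_{ji}}$ in (L6), and the first arguments in (L7), are squares, so this fact yields (L6)--(L7) in the even case and secures $\psi$.

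The main obstacle is the odd case. When some $a_{ki}$ is odd I must upgrade the cocycle $c_i$ to a genuine symbol, i.e.\ derive full bimultiplicativity (A1)--(A2). The idea is to combine multiplicativity on squares with the multiplicativity on $a_{ki}$-th powers supplied directly by (L6) and (L7): because $a_{ki}$ is odd, the squares together with the $a_{ki}$-th powers generate $F^{*}$ multiplicatively, via $x=x^{a_{ki}}\cdot(x^{(a_{ki}-1)/2})^{-2}$, so partial multiplicativity on these two families of arguments should propagate to all of $F^{*}$. Making this propagation rigorous is the delicate step, since a priori the multiplicativity defect of a cocycle is not itself additive in the offending variable; I expect to control it by feeding the square- and odd-power identities back into (B1) rather than by a naive generation argument. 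Once both facts are in place, $\phi$ and $\psi$ are well defined, $\bar\phi$ and $\psi$ are mutually inverse, and the theorem follows.
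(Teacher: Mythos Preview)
Your proposal is correct and matches the paper's approach essentially verbatim: define the map $\phi$ on generators, check the target relations (cocycle when $L_i=K_2(2,F)$, symbol when $L_i=K_2(F)$), and identify the kernel with $J$ by verifying that (L1)--(L4), (L6), (L7) already hold in $L_1\times\cdots\times L_n$. The step you flag as ``delicate'' is in fact the direct three-line computation the paper gives: writing $a_{ji}=2k+1$, one uses the cocycle identity $c_i(tu,v)=c_i(tu,v^{2k+1})c_i(tu,v^{2k})^{-1}$ (square-multiplicativity in the second slot), applies (L6) to the first factor and square-multiplicativity in the first slot to the second factor, and then reverses the first step on each piece --- no subtle propagation argument is needed.
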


This theorem will be the main result which we will use in our computations, so we shall prove it here.

First, we shall derive a few relations which hold in $K_2(2,F)$ (and hence $K_2(A,F)$) and which shall make life easier.

\begin{lemma}\label{rels}
Let $A=(a_{ij})$ be an $n\times n$ GCM. The following relations hold in $K_2(2,F)$, and hence in (each component of) $K_2(A,F)$, for all $t,u,v\in F^{*}$.

(i) $\{u,1\}=1$ and $\{1,u\}=1$

(ii) $\{t,u\}=\{u^{-1},t\}$

(iii) $\{t^2u,v\}=\{t^2,v\}\{u,v\}$ and $\{t,u^2v\}=\{t,u^2\}\{t,v\}$

(iv) $\{t^{2m},v\}=\{t^2,v\}^m$

(v) $\{tu,v^{2k}\}=\{t,v^{2k}\}\{u,v^{2k}\}$ and $\{t^{2k},uv\}=\{t^{2k},u\}\{t^{2k},v\}$ for $k\in\B{Z}$

(vi) $\{u^2,v\}=\{u,v\}\{v,u\}^{-1}=\{u,v^2\}$

(vii) $[u,v]_r:=\{u^r,v\}$ is a Steinberg cocycle for all $r\in\B{Z}$ and a Steinberg symbol for all even $r\in\B{Z}$.

\end{lemma}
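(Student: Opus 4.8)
The plan is to prove all seven relations for an arbitrary Steinberg cocycle $\{\cdot,\cdot\}$ — that is, any function satisfying (B1)--(B4) with values in an abelian group — since this simultaneously covers both possibilities for the components $L_i$: every Steinberg symbol is also a Steinberg cocycle (one checks (B1)--(B4) from (A1)--(A3) directly, e.g.\ (B1) holds because both sides equal $\{t,u\}\{t,v\}\{u,v\}$ by bimultiplicativity, and (B4) follows from (A3)). I would dispose of (i) immediately: putting $u=v=1$ in (B1) gives $\{t,1\}^2=\{t,1\}\{1,1\}=\{t,1\}$ by (B2), so $\{t,1\}=1$, and symmetrically $t=u=1$ yields $\{1,v\}=1$.

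The engine for everything else is the commutator pairing $\langle u,v\rangle:=\{u,v\}\{v,u\}^{-1}$. A manipulation of the cocycle identity (B1) alone — with no appeal to (B3) or (B4) — shows that $\langle\cdot,\cdot\rangle$ is bimultiplicative and antisymmetric; this is the one computation I would carry out in full, as it is the reusable tool. The second ingredient is the Steinberg relation, extracted from (B4): setting $v=1$ in (B4) and using (i) gives $\{u,1-u\}=1$ for $u\neq1$. From this I would derive the pivotal identity $\{u,-u\}=1$: apply $\{u,1-u\}=1$ to $u^{-1}$, rewrite $\{u^{-1},1-u^{-1}\}=\{u,(1-u^{-1})^{-1}\}$ via (B3), observe that $(1-u^{-1})^{-1}=-u(1-u)^{-1}$, and then use the invariance (B4), read as $\{u,w\}=\{u,(1-u)w\}$, to clear the factor $(1-u)^{-1}$, leaving $\{u,-u\}=1$. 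In particular $\langle u,1-u\rangle=1$, so $\langle\cdot,\cdot\rangle$ is itself a genuine Steinberg symbol.

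The substantive relations, and the place I expect the real difficulty, are (ii) and (vi). These are the analogues of the skew-symmetry $\{u,v\}=\{v,u\}^{-1}$ of Milnor $K$-theory, and the main obstacle is precisely that a Steinberg cocycle is \emph{not} bimultiplicative, so the usual $K_2^M$ argument (expand $\{uv,-uv\}=1$) is unavailable. My route would be to combine $\{u,-u\}=1$ with the cocycle identity to force (vi), $\{u^2,v\}=\{u,v\}\{v,u\}^{-1}=\langle u,v\rangle$, together with its mirror $\langle u,v\rangle=\{u,v^2\}$, and to read off (ii), $\{t,u\}=\{u^{-1},t\}$, at the same time. I expect these two to be genuinely interlocked: purely formal attempts to deduce either one from the cocycle relation plus the other run in circles (each reduces to a first-slot versus second-slot square-multiplicativity statement that feeds back into itself), and only the extra input $\{u,-u\}=1$ breaks the symmetry.

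Once (vi) is in hand the remainder is routine. Relation (iii) follows by applying (B1) with first argument factored as $t^2\cdot u$ and invoking that $\{t^2,\cdot\}=\langle t,\cdot\rangle$ is multiplicative in the second slot, which cancels the correction term; (iv) is then immediate by induction on $m$; and (v) extends (iii) and (iv) to all exponents $2k$ with $k\in\mathbb{Z}$, the negative cases coming from (B3). For (vii) I would split on the parity of $r$ using the decomposition furnished by (iii) and (vi): for $r=2s$ one has $\{u^{r},v\}=\langle u,v\rangle^{s}$, which is bimultiplicative and annihilates $\{u,1-u\}$, hence is a Steinberg symbol; for $r=2s+1$ one has $\{u^{r},v\}=\{u,v\}\,\langle u,v\rangle^{s}$, a pointwise product of the cocycle $\{\cdot,\cdot\}$ with the bi-homomorphism $\langle\cdot,\cdot\rangle^{s}$, and since a product of cocycles is again a cocycle, $[u,v]_r$ satisfies (B1)--(B4) for every $r$.
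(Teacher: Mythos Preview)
Your proposal is correct. Where the paper gives its own argument --- parts (i), (iv), (v), (vii), and the reduction of the first half of (iii) to the second --- your route matches it essentially step for step, including the even/odd split in (vii). The substantive difference is that the paper does not prove (ii), (vi), or the second identity in (iii) at all: it simply cites Matsumoto's Proposition~5.7(a) and Steinberg's Lemma~39. Your plan to reconstruct these via the commutator pairing $\langle u,v\rangle=\{u,v\}\{v,u\}^{-1}$ (bimultiplicative from (B1) alone) together with the identity $\{u,-u\}=1$ is precisely the argument underlying those references, so the mathematical content is the same but your write-up would be self-contained. Isolating $\langle\cdot,\cdot\rangle$ as a bimultiplicative Steinberg symbol early on is a clean organizational choice: (iii)--(vii) then fall out of (vi) uniformly. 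The paper instead pulls the first half of (iii) from (ii) by the slot-swap $\{t^2u,v\}=\{v^{-1},t^2u\}$, and gets (v) from (iii) via (B1); either route works.
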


\begin{proof}

Let $t,u,v\in F^{*}$.

(i) Using (B1), $\{u,1\}\{u,1\}=\{u,1\}\{1,1\}$ and since (B2) gives $\{1,1\}=1$ the result follows, and similarly $\{1,u\}=1$.

(ii) See Proposition 5.7(a) in \cite{MAT}.

(iii) For $\{t,u^2v\}=\{t,u^2\}\{t,v\}$, see Lemma 39 in \cite{SLOCG}. Then using this and (ii), we get 
$\{t^2u,v\}=\{v^{-1},t^2u\}=\{v^{-1},t^2\}\{v^{-1},u\}=\{t^2,v\}\{u,v\}$.

(vi) This follows easily from (iii), setting $u=t^{2m-2}$ and proceeding inductively.

(v) Using (B1) and (iii), we have $\{tu,v^{2k}\}=\{t,uv^{2k}\}\{u,v^{2k}\}\{t,u\}^{-1}=\{t,u\}\{t,v^{2k}\}\{u,v^{2k}\}\{t,u\}^{-1}=\{t,v^{2k}\}\{u,v^{2k}\}$.

Similarly, $\{t^{2k},uv\}=\{t^{2k},u\}\{t^{2k}u,v\}\{u,v\}^{-1}=\{t^{2k},u\}\{t^{2k},v\}\{u,v\}\{u,v\}^{-1}=\{t^{2k},u\}\{t^{2k},v\}$

(vi) See Proposition 5.7(a) in \cite{MAT}.

(vii) It is easy to see that every Steinberg symbol is a Steinberg cocycle (the details can be found in the proof of the next theorem), so when $r$ is even we just prove that we have a Steinberg symbol.

Case 1) $r$ even, $r=2s$

Then $[u,v]_r=\{u^r,v\}=\{u^2,v\}^s=[u,v]_2^s$, where the middle equality comes from (iv).

So it is sufficient to show that $[u,v]_2$ is a Steinberg symbol. Let $t,u,v\in F^{*}$.

(A1) $[tu,v]_2=\{t^2u^2,v\}=\{t^2,v\}\{u^2,v\}=[t,v]_2[u,v]_2$ by (iii).

(A2) $[t,uv]_2=\{t^2,uv\}=\{t^2,u\}\{t^2,v\}=[t,u]_2[t,v]_2$ by (v).

(A3) $[u,1-u]_2=\{u^2,1-u\}=\{u,(1-u)u\}\{u,1-u\}\{u,u\}^{-1}=\{u,u\}\{u,1-u\}\{u,u\}^{-1}=\{u,u\}\{u,1\}\{u,u\}^{-1}=1$ where we use B1), B4), B4) again, and then (i) to get the result, for $u\neq 1$.

So $[u,v]_r$ is a Steinberg symbol, and hence a Steinberg cocycle, for $r$ even.

Case 2) $r$ odd, $r=2s+1$

Then we have $[u,v]_r=\{u^{2s+1},v\}=\{u^{2s},v\}\{u,v\}=[u,v]_{2s}[u,v]_1$ using (iii),(iv).

Then $[u,v]_{2s}$ is a Steinberg cocycle by above and $[u,v]_1=\{u,v\}$ is a Steinberg cocycle by definition, which means that 
$[u,v]_r$ is a Steinberg cocycle for $r$ odd.

\end{proof}

With these relations in hand we can now prove Theorem \ref{RMThm}.

\begin{proof}

In this proof we will use Rehmann and Morita's presentation of $K_2(A,F)$.

We wish to define a homomorphism $\phi:L_1\times L_2\times\ldots L_n\rightarrow K_2(A,F)$ by $\phi(\{u,v\}_i)=c_i(u,v)$.

Since $K_2(A,F)$ is abelian, we have to show that

(1) If the ith column of $A$ contains an odd entry, then $c_i(u,v)$ is bimultiplicative and $c_i(u,1-u)=1$ if $u\neq 1$.

(2) If the ith column of $A$ contains all even entries, then $c_i(u,v)$ satisfies (B1)--(B4).

Claim (2) is immediate, as the relations (B1)--(B4) in $K_2(2,F)$ are completely analogous to (L1)--(L4) in $K_2(A,F)$.

For (1), using (L4) with $v=1$ gives $c_i(u,1-u)=c_i(u,1)=1$ by (i) of Lemma \ref{rels}. So we just need to show it is bimultiplicative.

By assumption there exists $j$ such that $a_{ji}$ is odd, so say $a_{ji}=2k+1$ for $k\in\B{Z}$. Then we have
\begin{equation} 
\begin{split}
c_i(tu,v) & = c_i(tu,v^{2k+1})c_i(tu,v^{2k})^{-1}\:\:\mbox{(By (iii))} \\
 & = c_i(t,v^{2k+1})c_i(u,v^{2k+1})c_i(t,v^{2k})^{-1}c_i(u,v^{2k})^{-1}\:\:\mbox{(By (L6) and (v))}\\
 & = c_i(t,v)c_i(t,v^{2k})c_i(u,v)c_i(u,v^{2k})c_i(t,v^{2k})^{-1}c_i(u,v^{2k})^{-1}\:\:\mbox{(By (iii))}\\
 & = c_i(t,v)c_i(u,v)
\end{split}
\nonumber
\end{equation}
Similarly, we can show that $c_i(t,uv)=c_i(t,u)c_i(t,v)$, concluding (1).

So $\phi$ is a well-defined homomorphism, and it is surjective since the $c_i(u,v)$ generate $K_2(A,F)$. 

The kernel of this homomorphism will come from the defining relations of $K_2(A,F)$, so by the First Isomorphism Theorem it is sufficient to show 
that (L1)--(L4) and (L6)--(L7) already hold in $L_1\times\ldots\times L_n$. Then, the kernel will simply be the preimages of (L5), which is exactly $J$.

If $L_i=K_2(2,F)$, then (L1)--(L4) already hold in $L_i$ since they are analogous to (B1)--(B4).

If $L_i=K_2(F)$ then (L1)--(L4) hold in $L_i$ since every Steinberg symbol is a Steinberg cocycle (c.f. \cite{MAT} Lemma 5.6).

If $L_i=K_2(F)$ then (L6) and (L7) come straight from bimultiplicativity.

Otherwise, $L_i=K_2(2,F)$ means than $a_{ij}$ is even for all $j\neq i$, which means we have (L6) and (L7) straight from (v) of the above Lemma.

So (L1)--(L4) and (L6)--(L7) all hold in $L_1\times\ldots\times L_n$, and hence the kernel of $\phi$ is $J$, giving

$$K_2(A,F)=\dfrac{L_1\times L_2\times\ldots\times L_n}{J}$$

\end{proof}

Note that (iii) and (vi) of Lemma \ref{rels} tell us that we can also write $J$ as being generated by the  $\{u^{a_{ji}},v\}_i\{u^{a_{ij}},v\}_j^{-1}$, and we shall use this various times throughout the remainder of this paper. In particular we can always assume $i<j$ amongst the generators of $J$.

\section{Some Easy Examples}

Using this new presentation, we can derive some immediate results.

\begin{proposition}
$K_2(A,F)=K_2(F)$ if $A$ is an $n\times n$ simply-laced indecomposable GCM (i.e. all off-diagonal entries in $A$ are either $0$ or $-1$) for $n\geq2$.
\end{proposition}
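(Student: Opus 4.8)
The plan is to apply Theorem~\ref{RMThm} and to exploit the fact that, for a simply-laced indecomposable GCM with $n\geq 2$, every column contains an entry equal to $-1$. First I would show that $L_i=K_2(F)$ for all $i$. Since $A$ is indecomposable and $n\geq 2$, no column can have all of its off-diagonal entries equal to $0$: if column $i$ were identically $0$ off the diagonal then, by the GCM condition that $a_{ki}=0\iff a_{ik}=0$, the $i$-th node would be disconnected from the rest, contradicting indecomposability. Hence each column has some $a_{ki}=-1$, which is odd, so $L_i=K_2(F)$ and $L_1\times\cdots\times L_n=K_2(F)^n$, with each factor consisting of bimultiplicative Steinberg symbols.

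Next I would simplify the generators of $J$. For a pair $i\neq j$ with $a_{ij}=a_{ji}=0$, bimultiplicativity gives $\{u,v^{a_{ji}}\}_i\{u^{a_{ij}},v\}_j^{-1}=\{u,1\}_i\{1,v\}_j^{-1}=1$, so such pairs contribute nothing. For a pair with $a_{ij}=a_{ji}=-1$, bimultiplicativity gives $\{u,v^{-1}\}=\{u,v\}^{-1}$ and $\{u^{-1},v\}=\{u,v\}^{-1}$, so the generator becomes the tuple with $\{u,v\}^{-1}$ in slot $i$, $\{u,v\}$ in slot $j$, and the identity elsewhere. Thus $J$ is generated exactly by these ``edge'' relations, one family for each edge of the Dynkin diagram of $A$.

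The conceptual heart is the interpretation of the quotient $K_2(F)^n/J$. Writing $e_k$ for the embedding into the $k$-th factor, each edge relation forces $[e_i(\{u,v\})]=[e_j(\{u,v\})]$ whenever $i$ and $j$ are adjacent, and since these homomorphisms agree on the generating symbols they agree on all of $K_2(F)$. Because $A$ is indecomposable its Dynkin diagram is connected, so by transitivity $[e_k(x)]$ is independent of $k$ for every $x\in K_2(F)$. Consequently the map $\delta:K_2(F)\to K_2(F)^n/J$, $\delta(x)=[e_1(x)]$, is surjective, since any class $[(x_1,\ldots,x_n)]$ equals $\prod_k[e_k(x_k)]=\delta\!\left(\prod_k x_k\right)$.

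Finally I would verify that $\delta$ is injective, which is where one must check the quotient does not collapse beyond $K_2(F)$. For this I would use the product homomorphism $\psi:K_2(F)^n\to K_2(F)$, $\psi(x_1,\ldots,x_n)=x_1\cdots x_n$. The computation above shows each generator of $J$ maps to $\{u,v\}^{-1}\{u,v\}=1$, so $J\subseteq\ker\psi$ and $\psi$ descends to $\bar\psi:K_2(F)^n/J\to K_2(F)$ with $\bar\psi\circ\delta=\mathrm{id}_{K_2(F)}$. A surjection possessing a left inverse is an isomorphism, giving $K_2(A,F)=K_2(F)^n/J\cong K_2(F)$. I expect the main obstacle to be exactly this last injectivity step: connectedness makes surjectivity transparent, but ruling out extra collapsing requires producing the explicit retraction $\bar\psi$.
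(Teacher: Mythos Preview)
Your proposal is correct and follows essentially the same route as the paper: apply Theorem~\ref{RMThm}, use indecomposability to get $L_i=K_2(F)$ for all $i$, simplify the generators of $J$ to edge relations $\{u,v\}_i^{-1}\{u,v\}_j$, and use connectedness of the Dynkin diagram to collapse the product to a single copy of $K_2(F)$. The only difference is that where the paper simply asserts the quotient ``clearly gives us'' $K_2(F)$, you make this precise by constructing the explicit retraction $\bar\psi$ via the product map; this is a welcome addition of rigor rather than a different argument.
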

\begin{proof}

Firstly, we note that since every column of $A$ has a non-zero off-diagonal entry (as $A$ is indecomposable),  we have $L_i=K_2(F)$ for all $1\leq i\leq n$.

Furthermore, we have for each pair $1\leq i\neq j\leq n$ that each generator of $J$ is either $\{u,v^{-1}\}_i\{u^{-1},v\}^{-1}_j$ for all $u,v\in F^{*}$ or $\{u,v^{0}\}_i\{u^{0},v\}_j^{-1}$ for all $u,v\in F^{*}$. As we are in $K_2(F)$ for all $i$, the second of these is trivial, and the 
first is the same as $\{u,v\}_i^{-1}\{u,v\}_j$ for all $u,v\in F^{*}$. Hence, since $A$ is indecomposable, quotienting $L_1\times\ldots\times L_n=K_2(F)\times\ldots\times K_2(F)$ by $J$ is the same 
as quotienting out by the equivalence relation $\{u,v\}_i\sim\{u,v\}_j$ for all $1\leq i,j\leq n$ and $u,v\in F^{*}$. Since the $\{u,v\}_i$ generate $L_i$, this clearly gives us that 

$$K_2(A,F)=\dfrac{K_2(F)\times\ldots\times K_2(F)}{J}=K_2(F)$$
\end{proof}

\begin{proposition}\label{del}
If $A=(a_{ij})$ is a $n\times n$ GCM with $a_{s1}=-1$ for some $2\leq s \leq n$, then we have

$$K_2(A,F)=\dfrac{L_2\times\ldots\times L_n}{J'}$$

where the $L_i$ are defined as usual and $J'$ is generated by $\{u,v^{a_{ji}}\}_i\{u^{a_{ij}},v\}_j^{-1}$ for all $u,v\in F^{*}$ and $2\leq i<j\leq n$, and $\{u^{a_{j1}a_{1i}},v\}_i\{u^{a_{i1}a_{1j}},v\}_j^{-1}$ for all $u,v\in F^{*}$ and $2\leq i<j\leq n$ .
\end{proposition}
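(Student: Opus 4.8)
The plan is to start from the Rehmann--Morita presentation $K_2(A,F)=(L_1\times\cdots\times L_n)/J$ of Theorem~\ref{RMThm} and to eliminate the first factor by a Tietze-style manipulation of generators and relations. Since $a_{s1}=-1$ is odd, the first column contains an odd entry, so $L_1=K_2(F)$ and $\{\cdot,\cdot\}_1$ is a genuine (bimultiplicative) Steinberg symbol. The crucial input is the generator of $J$ attached to the pair $(1,s)$: written in the form $\{u^{a_{s1}},v\}_1\{u^{a_{1s}},v\}_s^{-1}$ from the remark after Theorem~\ref{RMThm}, and using $a_{s1}=-1$, it reads $\{u^{-1},v\}_1=\{u^{a_{1s}},v\}_s$ in the quotient; replacing $u$ by $u^{-1}$ gives $\{u,v\}_1=\{u^{-a_{1s}},v\}_s$. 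This expresses every $L_1$-generator through $L_s$, so I would use it to delete the generators $\{u,v\}_1$ altogether, substituting $\{w,v\}_1\mapsto\{w^{-a_{1s}},v\}_s$ everywhere (note that this step uses only substitution of field elements, never that the cocycle is bimultiplicative).

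First I would check that the defining relations (A1)--(A3) of $L_1=K_2(F)$ become redundant after this substitution. Writing $[u,v]:=\{u^{-a_{1s}},v\}_s$, relation (A3) follows from (B4) applied to $[\cdot,\cdot]$ with $v=1$, together with Lemma~\ref{rels}(i), so that $[u,1-u]=[u,1]=1$; this needs no parity assumption. Relations (A1) and (A2) are exactly bimultiplicativity of $[\cdot,\cdot]$: if $L_s=K_2(F)$ this is automatic, while if $L_s=K_2(2,F)$ then every entry of column $s$ is even, hence $a_{1s}$ is even and Lemma~\ref{rels}(vii) guarantees that $[\cdot,\cdot]=\{\cdot^{-a_{1s}},\cdot\}_s$ is in fact a Steinberg symbol, so (A1) and (A2) hold. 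This parity interplay --- that $L_s=K_2(2,F)$ forces $a_{1s}$ to be even and thereby upgrades the power cocycle to a symbol --- is the point I expect to be the main obstacle, and the step that must be argued with care.

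Next I would track the remaining relations of $J$. Those indexed by $2\le i<j\le n$ do not involve index $1$ and survive untouched; these are precisely the first family of generators of $J'$. The remaining first-column generators, from the pairs $(1,j)$ with $j\neq s$, become $\{u^{-a_{j1}a_{1s}},v\}_s=\{u^{a_{1j}},v\}_j$ after substitution, and after replacing $u$ by $u^{-1}$ this reads $\{u^{a_{j1}a_{1s}},v\}_s=\{u^{a_{s1}a_{1j}},v\}_j$, which is exactly the second family of generators of $J'$ in the case $i=s$.

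Finally, it remains to recover the second family of generators of $J'$ for arbitrary $2\le i<j\le n$. I would derive these from the $i=s$ instances $\{u^{-a_{1k}},v\}_k=\{u^{a_{k1}a_{1s}},v\}_s$ just obtained: substituting $u\mapsto u^{-a_{j1}}$ in the instance for $k=i$ and $u\mapsto u^{-a_{i1}}$ in the instance for $k=j$ routes both $\{u^{a_{j1}a_{1i}},v\}_i$ and $\{u^{a_{i1}a_{1j}},v\}_j$ through the same element of $L_s$ with exponent $-a_{i1}a_{j1}a_{1s}$, and comparing yields $\{u^{a_{j1}a_{1i}},v\}_i=\{u^{a_{i1}a_{1j}},v\}_j$. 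Thus the subgroup cut out by the surviving and transformed relations is exactly $J'$, giving $K_2(A,F)=(L_2\times\cdots\times L_n)/J'$. To make this fully rigorous rather than a formal Tietze argument, I would instead phrase it as the construction of mutually inverse homomorphisms between $(L_1\times\cdots\times L_n)/J$ and $(L_2\times\cdots\times L_n)/J'$, using the formula $\{u,v\}_1=\{u^{-a_{1s}},v\}_s$ to define the map reintroducing the first coordinate and the verifications above to see that both maps are well defined.
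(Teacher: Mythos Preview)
Your proposal is correct and follows essentially the same approach as the paper: both construct mutually inverse homomorphisms between $(L_1\times\cdots\times L_n)/J$ and $(L_2\times\cdots\times L_n)/J'$, using the rule $\{u,v\}_1\mapsto\{u^{-a_{1s}},v\}_s$ (equivalently $\{u,v^{-a_{1s}}\}_s$) to eliminate the first coordinate. The paper's proof is quite terse and leaves the verifications to the reader; your write-up supplies precisely those details --- in particular the parity argument showing that when $L_s=K_2(2,F)$ the exponent $-a_{1s}$ is even so that $\{u^{-a_{1s}},v\}_s$ is a Steinberg symbol, and the observation that the full second family of generators of $J'$ is already implied by the instances involving the index $s$.
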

\begin{proof}

If $a_{s1}=-1$ then $L_1=K_2(F)$, and we can write $J$ as being generated by $\{u,v^{a_{ji}}\}_i\{u^{a_{ij}},v\}_j^{-1}$ for all $u,v\in F^{*}$ and $2\leq i<j\leq n$, and $\{u,v\}_1^{a_{j1}}\{u^{a_{j1}},v\}_j^{-1}$ for all $u,v\in F^{*}$ and $2\leq j\leq n$.

We can now define a map $$f:\dfrac{L_1\times L_2\times\ldots\times L_n}{J}\rightarrow\dfrac{L_2\times\ldots\times L_n}{J'}$$ by
$f(\{u,v\}_i=\{u,v\}_i$ for $i\neq 1$ and $f(\{u,v\}_1)=\{u,v^{-a_{1s}}\}_s$ for all $u,v\in F^{*}$.

This is a well-defined homomorphism because it is easy to see that the images of the generators satisfy the defining relations of the domain, and it is clearly surjective (details are left to the reader).

We can also define a map $$g:\dfrac{L_2\times\ldots\times L_n}{J'}\rightarrow\dfrac{L_1\times L_2\times\ldots\times L_n}{J}$$ by 
$g(\{u,v\}_i)=\{u,v\}_i$ for all $i\neq t$ and $u,v\in F^{*}$. 

This is well-defined homomorphism in a similar way as above and is clearly a left inverse to $f$ (again, the details are left to the reader).

Hence $f$ is an isomorphism and the result follows from Theorem \ref{RMThm}.
\end{proof}

Note that by simultaneously reordering the rows and columns of the matrix if necessary, this same argument holds for a matrix with a $-1$ entry in any column.

In particular, in the case when $A$ has a column (say the t-th) in which the only non-zero off-diagonal entry is $a_{st}=-1$, the proposition will give us that $K_2(A,F)=K_2(A',F)$, where 
$A'$ is the matrix obtained from $A$ by deleting the t-th row and column, except in the case where $a_{ts}$ is odd and all other entries in this column are even. In this case, deleting the t-th row and column would change the $L_t$ from being $K_2(F)$ to $K_2(2,F)$, which means that we wouldn't get $K_2(A',F)$.

Using this process, it is easy to verify Matsumoto's Theorem and obtain simpler presentations for most of the affine GCMs.

\begin{example}
If $A$ is a finite GCM, then

$$K_2(A,F) = \twopartdef { K_2 (F) } {A\neq C_n (n\geq 1)} {K_2(2,F)} {A=C_n (n\geq 1)}$$

where $C_n$ is used as in the standard notation for the finite-dimensional simple Lie algebras (with $C_1=A_1$ and $C_2=B_2$).

If $A$ is an affine GCM, then

$$K_2(A,F) = \twopartdef { K_2 (F) } {A\neq \tilde{C_n'},\tilde{C_n} (n\geq 1)} {K_2(2,F)} {A=\tilde{C_n'} (n\geq 1)}$$

where we use Carter's notation for the affine Kac-Moody Algebras (cf.\cite{LAFAT}) and setting $\tilde{C_1'}=\tilde{A_1'}$ and $\tilde{C_1}=\tilde{A_1}$ 

Both these results can be seen just by iterations of the above result, each time using a column with a -1 as the single non-zero off-diagonal entry. 

By the same process, we can also show that $K_2(\tilde{C_n},F)=K_2(\tilde{A_1},F)$ for all $n\geq 1$, although computing what that actually is requires a little more work. We shall see how to do this in Chapter 6.
\end{example}

\section{Odd Columns}

We now aim to obtain a simplification of Rehmann and Morita's presentation in the case when all the columns of $A=(a_{ij})$ (an indecomposable $n\times n$ GCM) have an odd entry. Recall that we are using the definition of Kac-Moody algebras from Carter \cite{LAFAT} where $a_{ij}=\alpha_j(h_i)$, where $\alpha_j$ is a simple root and $h_i$ is a simple coroot. Some authors instead use $a_{ij}=\alpha_i(h_j)$, in which case the same theorem applies but requires an odd entry in every row of the matrix rather than every column.

Theorem \ref{RMThm} then tells us that

$$K_2(A,F)=\dfrac{K_2(F)\times\ldots\times K_2(F)}{J}$$

where $J$ is defined to be the subgroup of $K_2(F)\times\ldots\times K_2(F)$ generated by $\{u,v\}_i^{a_{ji}}\{u,v\}_j^{-a_{ij}}$ for all $u,v\in F^{*}$ and $1\leq i<j\leq n$, with $\{\cdot,\cdot\}_i$ the Steinberg symbol for the ith summand.

\begin{theorem}\label{odd}

Let $A=(a_{ij})$ be an indecomposable GCM such that every column contains an odd entry, and let $F$ be a field. Let G be the abelian group defined by $G=\langle x_1,\ldots,x_n\,\vert\, x_i^{a_{ji}}=x_j^{a_{ij}}, [x_i,x_j]=1\; \mbox{for}\; 1\leq i<j\leq n\rangle$.

Suppose $G\cong \B{Z}/{r_1\B{Z}}\times\ldots\times\B{Z}/{r_s\B{Z}}\times\B{Z}^{n-s}$ for some positive integers $r_1,\ldots,r_s$ and $s\in\{0,1,\ldots,n\}$.

Then $K_2(A,F)\cong \dfrac{K_2(F)}{r_1K_2(F)}\times\ldots\times\dfrac{K_2(F)}{r_sK_2(F)}\times K_2(F)^{n-s}$.
\end{theorem}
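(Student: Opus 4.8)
The plan is to recognise $K_2(A,F)$ as the tensor product over $\B{Z}$ of $K_2(F)$ with the finitely generated abelian group $G$, and then to read off the stated answer from the given decomposition of $G$. Write $M=K_2(F)$ for brevity, thought of additively. The abelian group $G$ is presented as $G=\B{Z}^n/R$, where $R\leq\B{Z}^n$ is the subgroup generated by the relation vectors $r_{ij}=a_{ji}e_i-a_{ij}e_j$ for $1\leq i<j\leq n$ (written additively, the defining relation $x_i^{a_{ji}}=x_j^{a_{ij}}$ becomes $a_{ji}e_i=a_{ij}e_j$). Meanwhile, from the simplified presentation recalled just before the statement, $K_2(A,F)=M^n/J$, where $J$ is the subgroup generated by the tuples having $\{u,v\}^{a_{ji}}$ in slot $i$ and $\{u,v\}^{-a_{ij}}$ in slot $j$ (and trivial elsewhere), for all $u,v\in F^{*}$ and $1\leq i<j\leq n$.

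First I would make the natural identification $M^n\cong M\otimes_{\B{Z}}\B{Z}^n$, under which the tuple with $m$ in slot $i$ corresponds to $m\otimes e_i$, and apply the right-exact functor $M\otimes_{\B{Z}}-$ to the short exact sequence $0\to R\to\B{Z}^n\to G\to 0$. This yields a right-exact sequence $M\otimes R\to M\otimes\B{Z}^n\to M\otimes G\to 0$ and hence a natural isomorphism
$$M\otimes_{\B{Z}}G\cong\frac{M\otimes_{\B{Z}}\B{Z}^n}{\operatorname{im}(M\otimes R)}\cong\frac{M^n}{\operatorname{im}(M\otimes R)}.$$
The crux is then to verify that $\operatorname{im}(M\otimes R)=J$ under this identification. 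Since $R$ is generated by the $r_{ij}$, additivity of the tensor product shows $\operatorname{im}(M\otimes R)$ is generated by the images of $m\otimes r_{ij}=a_{ji}(m\otimes e_i)-a_{ij}(m\otimes e_j)$, that is, by the tuples with $a_{ji}m$ in slot $i$ and $-a_{ij}m$ in slot $j$, for all $m\in M$ and $i<j$.

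The one point requiring care — and the step I expect to be the main obstacle — is reconciling the fact that the generators of $J$ involve only Steinberg symbols $\{u,v\}$, whereas $\operatorname{im}(M\otimes R)$ uses arbitrary $m\in M$. I would resolve this by fixing a pair $i<j$ and observing that $m\mapsto(\text{tuple with }a_{ji}m\text{ in slot }i\text{ and }-a_{ij}m\text{ in slot }j)$ is a group homomorphism $M\to M^n$; its image is generated by its values on the Steinberg symbols, because those symbols generate $M=K_2(F)$ by Matsumoto's presentation. As these values are exactly the generators of $J$ for this pair and $J$ is a subgroup, $J$ already contains the value on every $m\in M$. Running this over all pairs gives $\operatorname{im}(M\otimes R)=J$, and therefore $K_2(A,F)\cong M\otimes_{\B{Z}}G$.

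Finally I would feed in the hypothesised decomposition $G\cong\B{Z}/r_1\B{Z}\times\cdots\times\B{Z}/r_s\B{Z}\times\B{Z}^{n-s}$. Using that $-\otimes_{\B{Z}}-$ commutes with finite direct sums, together with the standard isomorphisms $M\otimes_{\B{Z}}\B{Z}/r\B{Z}\cong M/rM$ and $M\otimes_{\B{Z}}\B{Z}\cong M$, this gives
$$K_2(A,F)\cong\frac{K_2(F)}{r_1K_2(F)}\times\cdots\times\frac{K_2(F)}{r_sK_2(F)}\times K_2(F)^{n-s},$$
as required.
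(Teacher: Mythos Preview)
Your argument is correct, but it takes a genuinely different route from the paper. The paper constructs explicit mutually inverse homomorphisms $\Phi,\Psi$ between $K_2(A,F)$ and the target product: writing the given isomorphism as $\phi(x_i)=y_1^{\mu_{1i}}\cdots y_n^{\mu_{ni}}$ with inverse $\psi(y_i)=x_1^{\nu_{1i}}\cdots x_n^{\nu_{ni}}$, it sets $\Phi(\{u,v\}_i)=[u,v]_1^{\mu_{1i}}\cdots[u,v]_n^{\mu_{ni}}$ and $\Psi([u,v]_i)=\{u,v\}_1^{\nu_{1i}}\cdots\{u,v\}_n^{\nu_{ni}}$, then checks by hand that these respect the presentations and compose to identities. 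Your approach instead packages everything as $K_2(A,F)\cong K_2(F)\otimes_{\B{Z}}G$ via right-exactness of the tensor functor, and then reads off the answer from the invariant-factor decomposition of $G$. This is more conceptual and considerably shorter; it also makes transparent \emph{why} the structure of $G$ controls $K_2(A,F)$ in the all-odd-columns case. The paper's explicit approach, by contrast, produces concrete maps and---as the authors themselves remark after their proof---is chosen deliberately so that the reader is prepared for the later mixed $K_2(F)/K_2(2,F)$ situations (Sections~6--8 and the conjecture in Section~9), where no single abelian group $M$ is being tensored and the clean functorial argument is not available.
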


\begin{proof}

Since $K_2(A,F)=\dfrac{K_2(F)\times\ldots\times K_2(F)}{J}$ we can present $K_2(A,F)$ as being the abelian group generated by symbols $\{u,v\}_i$ for $u,v\in F^{*}$ and $1\leq i\leq n$ with defining relations

(X1) $\{tu,v\}_i=\{t,v\}_i\{u,v\}_i$

(X2) $\{t,uv\}_i=\{t,u\}_i\{t,v\}_i$

(X3) $\{u,1-u\}_i=1$ for $u\neq 1$

(X4) $\{u,v\}_i^{a_{ji}}=\{u,v\}_j^{a_{ij}}$

for all $t,u,v\in F^{*}$ and $1\leq i< j\leq n$.

We can also present $\dfrac{K_2(F)}{r_1K_2(F)}\times\ldots\times\dfrac{K_2(F)}{r_sK_2(F)}\times K_2(F)^{n-s}$ as being the abelian group generated by symbols $[u,v]_i$ for $u,v\in F^{*}$ and $1\leq i\leq n$ with defining relations:

(Y1) $[tu,v]_i=[t,v]_i[u,v]_i$

(Y2) $[t,uv]_i=[t,u]_i[t,v]_i$

(Y3) $[u,1-u]_i=1$ for $u\neq 1$

for all $t,u,v\in F^{*}$ and $1\leq i\leq n$, and 

Y4) $[u,v]_i^{r_i}=1$ 

for all $u,v\in F^{*}$ and $1\leq i\leq s$.

Let $\phi:G\rightarrow \B{Z}/{r_1\B{Z}}\times\ldots\times\B{Z}/{r_s\B{Z}}\times\B{Z}^{n-s}$ be the isomorphism from the statement of the theorem, with inverse $\psi:\B{Z}/{r_1\B{Z}}\times\ldots\times\B{Z}/{r_s\B{Z}}\times\B{Z}^{n-s}\rightarrow G$.

Suppose $\phi(x_i)=y_1^{\mu_{1i}}\ldots y_n^{\mu_{ni}}$ for $1\leq i\leq n$, where the $y_i$ are the generators of the cyclic summands of $\B{Z}/{r_1\B{Z}}\times\ldots\times\B{Z}/{r_s\B{Z}}\times\B{Z}^{n-s}$, and that
$\psi(y_i)=x_1^{\nu_{1i}}\ldots x_n^{\nu_{ni}}$ for $1\leq i\leq n$, where $\mu_{ji},\nu_{ji}\in\B{Z}$.

Define $\Phi:K_2(A,F)\rightarrow \dfrac{K_2(F)}{r_1K_2(F)}\times\ldots\times\dfrac{K_2(F)}{r_sK_2(F)}\times K_2(F)^{n-s}$ by $$\Phi(\{u,v\}_i)=[u,v]_1^{\mu_{1i}}\ldots [u,v]_n^{\mu_{ni}}$$ for each $u,v\in F^{*}$ and $1\leq i\leq n$.

To show that this is a well defined homomorphism, we need to show that\\ $[u,v]_1^{\mu_{1i}}\ldots [u,v]_n^{\mu_{ni}}$ satisfies (X1)--(X4).

Since everything is abelian and (X1)--(X3) are the same as (Y1)--(Y3), we immediately see that $[u,v]_1^{\mu_{1i}}\ldots [u,v]_n^{\mu_{ni}}$ satisfies (X1)--(X3).

For (X4) we need that for each $u,v\in F^{*}$ and $1\leq i< j\leq n$ we have $$[u,v]_1^{\mu_{1i}a_{ji}}\ldots [u,v]_n^{\mu_{ni}a_{ji}}=[u,v]_1^{\mu_{1j}a_{ij}}\ldots [u,v]_n^{\mu_{nj}a_{ij}}$$

But we know, since $\phi$ is well-defined and $x_i^{a_{ji}}=x_j^{a_{ij}}$, that $y_1^{\mu_{1i}a_{ji}}\ldots y_n^{\mu_{ni}a_{ji}}=y_1^{\mu_{1j}a_{ij}}\ldots y_n^{\mu_{nj}a_{ij}}$.

Since the defining relations satisfied by the $y_i$ are also satisfied by the $[u,v]_i$, we get that the above equality must hold in $\dfrac{K_2(F)}{r_1K_2(F)}\times\ldots\times\dfrac{K_2(F)}{r_sK_2(F)}\times K_2(F)^{n-s}$, as required.

Now, define $\Psi:\dfrac{K_2(F)}{r_1K_2(F)}\times\ldots\times\dfrac{K_2(F)}{r_sK_2(F)}\times K_2(F)^{n-s}\rightarrow K_2(A,F)$ by $$\Psi([u,v]_i)=\{u,v\}_1^{\nu_{1i}}\ldots \{u,v\}_n^{\nu_{ni}}$$ for each $u,v\in F^{*}$ and $1\leq i\leq n$.

To show that this is a well defined homomorphism, we need to show that $\{u,v\}_1^{\nu_{1i}}\ldots \{u,v\}_n^{\nu_{ni}}$ satisfies (Y1)--(Y4).

Since everything is abelian and (Y1)--(Y3) are the same as (X1)--(X3), we immediately see that $\{u,v\}_1^{\nu_{1i}}\ldots \{u,v\}_n^{\nu_{ni}}$ satisfies (Y1)--(Y3).

For (Y4) we need that for each $u,v\in F^{*}$ and $1\leq i\leq s$ we have $$\{u,v\}_1^{\nu_{1i}r_i}\ldots \{u,v\}_n^{\nu_{ni}r_i}=1$$

But we know, since $\psi$ is well-defined and $y_i^{r_i}=1$ for $1\leq i\leq s$, that $x_1^{\nu_{1i}r_i}\ldots x_n^{\nu_{ni}r_i}=1$.

Since the defining relations satisfied by the $x_i$ are also satisfied by the $\{u,v\}_i$, we get that the above equality must hold in $K_2(A,F)$, as required.

So we have two well-defined homomorphisms $\Phi$ and $\Psi$.

Now, we have for any $u,v\in F^{*}$ and $1\leq i\leq n$ that $$\Psi(\Phi(\{u,v\}_i))=\Psi([u,v]_1^{\mu_{1i}}\ldots [u,v]_n^{\mu_{ni}})=\{u,v\}_1^{\mu_{1i}\nu_{11}+\ldots+\mu_{ni}\nu_{1n}}\ldots\{u,v\}_n^{\mu_{ni}\nu_{n1}+\ldots+\mu_{ni}\nu_{nn}}$$

But since $\psi(\phi(x_i))=x_i$ for any $1\leq i\leq n$, we have $x_1^{\mu_{1i}\nu_{11}+\ldots+\mu_{ni}\nu_{1n}}\ldots x_n^{\mu_{ni}\nu_{n1}+\ldots+\mu_{ni}\nu_{nn}}=x_i$ for any $1\leq i\leq n$, and hence, since the defining relations for the 
$x_i$ are satisfied by the $\{u,v\}_i$, we get that $$\Psi(\Phi(\{u,v\}_i))=\{u,v\}_i$$ for all $u,v\in F^{*}$ and $1\leq i\leq n$.

By a similar argument, we can get that $\Phi(\Psi([u,v]_i))=[u,v]_i$ for all $u,v\in F^{*}$ and $1\leq i\leq n$.

Hence, as the $\{u,v\}_i$ generate $K_2(A,F)$ and the $[u,v]_i$ generate $\dfrac{K_2(F)}{r_1K_2(F)}\times\ldots\times\dfrac{K_2(F)}{r_sK_2(F)}\times K_2(F)^{n-s}$, we can conclude that $\Phi$ and $\Psi$ are inverse
homomorphisms.

Hence $\Phi$ is an isomorphism and so $$K_2(A,F)\cong \dfrac{K_2(F)}{r_1K_2(F)}\times\ldots\times\dfrac{K_2(F)}{r_sK_2(F)}\times K_2(F)^{n-s}$$
\end{proof}

The above proof could be simplified by observing that as we can obtain the elements $y_i$ from the elements $x_j$ in $G$ by performing unimodular row and column operations to an appropriate matrix (and vice versa), we can similarly obtain (for fixed $u,v\in F^{*}$) the $[u,v]_i$ from the $\{u,v\}_j$ by performing those same operations (and vice versa). The fact that these elements are obtained in this way would cut down the proof that the given maps are well-defined and inverses - since we know, for example, that $\Psi$ just inverts the operations of $\Phi$.

Nonetheless, we have given the more explicit version of this proof since for later proofs (where the analogy with the Fundamental Theorem of Finitely Generated Abelian Groups is not so strong and cannot be applied so easily) we will have a greater need to understand the details of why given maps are isomorphisms.

\
\begin{example}
	Consider the following hyperbolic GCM
	
	$$A=\begin{pmatrix}
	2 & -1 & -3\\
	-3 & 2 & -1\\
	-1 & -3 & 2\\
	\end{pmatrix}$$ 
	
	This has an odd entry in every column, so we can apply the above theorem. We form the abelian group

	$$G:=\langle x,y,z\,\vert\,x^{-3}=y^{-1},\,x^{-1}=z^{-3},\,y^{-3}=z^{-1}, xy=yx, xz=zx, yz=zy\rangle$$
	
	It can easily be checked, for example using Sage or Magma, that
	$$G\cong \B{Z}/2\B{Z}\times\B{Z}/13\B{Z}$$
	
	Hence the above theorem tells us that
	$$K_2(A,F)\cong \dfrac{K_2(F)}{2K_2(F)}\times \dfrac{K_2(F)}{13K_2(F)}$$
	
	In particular, if $F$ is a local field, then we get
	$$K_2(A,F)\cong \mu_2\times\mu_{13}$$
	
	where $\mu_m=\{u\in F^{*}\,\vert\,u^m=1\}$ (cf. Section 5, Example 7 in \cite{AMTTFKMG}).
	
\end{example}.

\section{Rank 2 GCMs}

Since there a unique $1\times 1$ GCM, which is finite, every $2\times 2$ GCM is either finite, affine or hyperbolic. Since our goal in this paper is to 
give presentations of $K_2(A,F)$ for all the hyperbolic GCMs $A$, we aim in this chapter to give a characterisation of $K_2(A,F)$ for all $2\times 2$ GCMs.

For this chapter, we will be interested in matrices of the form

$$A=\begin{pmatrix}
2 & -b\\
-a & 2\\
\end{pmatrix}$$

where $a,b\in\B{Z}_{>0}$.

If $a$ and $b$ are both odd then each column has an odd entry, which means that we can apply Theorem \ref{odd} to get

$$K_2(A,F)\cong \dfrac{K_2(F)}{hK_2(F)}\times K_2(F)$$

where $h=hcf(a,b)$ (since $\langle x,y\,\vert\,x^a=y^b\rangle\cong \B{Z}/h\B{Z}\times \B{Z}$ as abelian groups).

\begin{theorem}

In the case where $A=\begin{pmatrix}
2 & -b\\
-a & 2\\
\end{pmatrix}$ with both $a$ and $b$ even, we have

$$K_2(A,F)\cong K_2(2,F)\times\dfrac{K_2(2,F)}{h\langle\{u^2,v\}\,\vert\, u,v\in F^{*}\rangle}$$

where $h=hcf(\frac{a}{2},\frac{b}{2})$. 

\end{theorem}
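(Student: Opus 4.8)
The plan is to mirror the strategy of Theorem \ref{odd}: reduce $K_2(A,F)$ to an explicit finite presentation, write down an analogous presentation of the target group, and then exhibit a pair of mutually inverse homomorphisms defined on generators by a B\'ezout-type change of exponents. First I would identify the two factors. Since $a$ and $b$ are even, both entries of each column of $A$ are even, so Theorem \ref{RMThm} gives $L_1=L_2=K_2(2,F)$ and hence $K_2(A,F)=\frac{K_2(2,F)\times K_2(2,F)}{J}$. Writing $\alpha=a/2$ and $\beta=b/2$, I would use the note following Theorem \ref{RMThm} together with parts (iii), (iv) and (vi) of Lemma \ref{rels} to rewrite the single family of generators of $J$ (the case $i=1$, $j=2$) as $\{u^2,v\}_1^{-\alpha}\{u^2,v\}_2^{\beta}$. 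Thus $K_2(A,F)$ has the presentation: generators $\{u,v\}_1,\{u,v\}_2$ for $u,v\in F^{*}$, subject to the cocycle relations (B1)--(B4) in each family, together with $\{u^2,v\}_1^{\alpha}=\{u^2,v\}_2^{\beta}$ for all $u,v\in F^{*}$.

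Next I would present the target. Setting $S=\langle\{u^2,v\}\mid u,v\in F^{*}\rangle$, note that $hS=\langle\{u^2,v\}^{h}\mid u,v\in F^{*}\rangle$, so $K_2(2,F)/hS$ is presented by (B1)--(B4) together with $\{u^2,v\}^{h}=1$. Hence the target $K_2(2,F)\times\frac{K_2(2,F)}{hS}$ is generated by symbols $[u,v]_1,[u,v]_2$ subject to (B1)--(B4) in each family and the single extra family of relations $[u^2,v]_2^{h}=1$. I would then write $\alpha=h\alpha'$ and $\beta=h\beta'$ with $\mathrm{hcf}(\alpha',\beta')=1$, and fix integers $p,q$ with $p\alpha'+q\beta'=1$.

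I would then define $\Phi\colon K_2(A,F)\to K_2(2,F)\times\frac{K_2(2,F)}{hS}$ on generators by $\Phi(\{u,v\}_1)=[u,v]_1^{\beta'}[u,v]_2^{p}$ and $\Phi(\{u,v\}_2)=[u,v]_1^{\alpha'}[u,v]_2^{-q}$, and $\Psi$ in the other direction by $\Psi([u,v]_1)=\{u,v\}_1^{q}\{u,v\}_2^{p}$ and $\Psi([u,v]_2)=\{u,v\}_1^{\alpha'}\{u,v\}_2^{-\beta'}$. Because everything is abelian, (B1)--(B4) are automatically respected by both maps, so well-definedness reduces to checking the one remaining relation in each presentation. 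For $\Phi$ this is the crux: applying $\Phi$ to $\{u^2,v\}_1^{\alpha}\{u^2,v\}_2^{-\beta}$ gives $[u^2,v]_1^{\,\alpha\beta'-\beta\alpha'}[u^2,v]_2^{\,p\alpha+q\beta}=[u^2,v]_1^{0}[u^2,v]_2^{h}$, using $\alpha\beta'-\beta\alpha'=0$ and $p\alpha+q\beta=h(p\alpha'+q\beta')=h$; this is precisely the relation $[u^2,v]_2^{h}=1$ defining the quotient factor, so $\Phi$ is well defined. Dually, $\Psi([u^2,v]_2^{h})=\{u^2,v\}_1^{\alpha}\{u^2,v\}_2^{-\beta}$, which is trivial in $K_2(A,F)$, so $\Psi$ is well defined. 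Finally I would verify that $\Phi\circ\Psi$ and $\Psi\circ\Phi$ act as the identity on generators, which follows from $p\alpha'+q\beta'=1$ by the same short exponent computations used in Theorem \ref{odd}; as the symbols generate, this makes $\Phi$ and $\Psi$ mutually inverse isomorphisms.

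I expect the only genuinely delicate point, and what distinguishes this from Theorem \ref{odd}, to be that $J$ constrains only the even symbols $\{u^2,v\}_i$ rather than all symbols, so the change of exponents has to be arranged to leave the odd symbols and the free factor untouched while carrying the torsion onto the right place. The reason the construction still closes up is structural: because $a$ and $b$ are even, the exponent $p\alpha+q\beta$ arising from $\Phi$ equals $h$ exactly, so the $J$-relation maps onto the torsion relation $[u^2,v]_2^{h}=1$ with no spurious constraint. I would double-check this divisibility bookkeeping carefully, since it is the step where the hypothesis that both $a$ and $b$ are even is really used.
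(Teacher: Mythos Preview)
Your proposal is correct and follows essentially the same argument as the paper's own proof: both set up the two presentations, pick a B\'ezout identity for $\alpha'=\alpha/h$ and $\beta'=\beta/h$, and define mutually inverse homomorphisms via the $2\times 2$ unimodular change of exponents $\begin{pmatrix}\beta'&\alpha'\\p&-q\end{pmatrix}$ (the paper uses the notation $c,d,r,s$ in place of your $\alpha,\beta,p,q$, but the maps are literally the same). Your observation that the cocycle relations (B1)--(B4) are preserved under integer powers and products in an abelian group is exactly what the paper means by ``clear from (S1)--(S4)'', and your divisibility check $p\alpha+q\beta=h$ is the same crux computation the paper performs.
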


\begin{proof}
	Using Theorem \ref{RMThm} and Lemma \ref{rels}(vi) and setting $c=\frac{a}{2}$ and $d=\frac{b}{2}$, we can write $$K_2(A,F)=\dfrac{K_2(2,F)\times K_2(2,F)}{J}$$ where $J$ is the (normal) subgroup generated by $\{u^2,v\}_1^{c}\{u^2,v\}_2^{-d}$ for all $u,v\in F^{*}$.

Hence we can present $K_2(A,F)$ as being the abelian group generated by symbols $\{u,v\}_i$ for $u,v\in F^{*}$ and $i=1,2$ with defining relations:

(R1) $\{t,u\}_i\{tu,v\}_i=\{t,uv\}_i\{u,v\}_i$

(R2) $\{1,1\}_i=1$

(R3) $\{u,v\}_i=\{u^{-1},v^{-1}\}_i$

(R4) $\{u,v\}_i=\{u,(1-u)v\}_i$ if $u\neq 1$

(R5) $\{u^2,v\}_1^c=\{u^2,v\}_2^d$

for all $t,u,v\in F^{*}$ and $i=1,2$ (using Lemma \ref{rels} (iv) for (R5)).

Also, we can present $K_2(2,F)\times\dfrac{K_2(2,F)}{h\langle\{u^2,v\}\,\vert\, u,v\in F^{*}\rangle}$ as being the abelian group generated by the symbols $[u,v]_i$ for $u,v\in F^{*}$ and $i=1,2$ subject to the defining relations:

(S1) $[t,u]_i[tu,v]_i=[t,uv]_i[u,v]_i$

(S2) $[1,1]_i=1$

(S3) $[u,v]_i=[u^{-1},v^{-1}]_i$

(S4) $[u,v]_i=[u,(1-u)v]_i$ if $u\neq 1$

(S5) $[u^2,v]_2^h=1$

for all $t,u,v\in F^{*}$ and $i=1,2$ (again, using Lemma \ref{rels} (iv)).

Suppose that $rc+sd=h$ for $r,s\in\B{Z}$. Define the map $\Phi:K_2(A,F)\rightarrow K_2(2,F)\times\dfrac{K_2(2,F)}{h\langle\{u^2,v\}\,\vert\, u,v\in F^{*}\rangle}$ by $$\Phi(\{u,v\}_1)=[u,v]_1^{\frac{d}{h}}[u,v]_2^r$$ $$\Phi(\{u,v\}_2)=[u,v]_1^{\frac{c}{h}}[u,v]_2^{-s}$$ for all $u,v\in F^{*}$.

To show that this is a well defined homomorphism, we need to show that (R1)--(R4) are satisfied by $[u,v]_1^{\frac{d}{h}}[u,v]_2^r$ and $[u,v]_1^{\frac{c}{h}}[u,v]_2^{-s}$, which is clear from (S1)--(S4), and that 
$[u^2,v]_1^{\frac{cd}{h}}[u^2,v]_2^{rc}=[u^2,v]_1^{\frac{cd}{h}}[u^2,v]_2^{-sd}$, which follows immediately from $rc+sd=h$ and (S5).

Now define the map $\Psi:K_2(2,F)\times\dfrac{K_2(2,F)}{h\langle\{u^2,v\}\,\vert\, u,v\in F^{*}\rangle}\rightarrow K_2(2,F)$ by $$\Psi([u,v]_1)=\{u,v\}_1^s\{u,v\}_2^r$$ $$\Psi([u,v]_2)=\{u,v\}_1^{\frac{c}{h}}\{u,v\}_2^{\frac{-d}{h}}$$ for all $u,v\in F^{*}$.

To show that this is a well defined homomorphism, we need to show that (S1)--S4) are satisfied by $\{u,v\}_1^s\{u,v\}_2^r$ and $\{u,v\}_1^{\frac{c}{h}}\{u,v\}_2^{\frac{-d}{h}}$, which is clear from (R1)--(R4), and that 
$\{u^2,v\}_1^c\{u^2,v\}_2^{-d}=1$, which follows immediately from (R5).

So we have two well-defined homomorphisms, $\Phi$ and $\Psi$.

We now see that $$\Phi(\Psi([u,v]_1)=\Phi(\{u,v\}_1^s\{u,v\}_2^r)=[u,v]_1^{\frac{ds}{h}}[u,v]_2^{rs}[u,v]_1^{\frac{cr}{h}}[u,v]_2^{-rs}$$
$$=[u,v]_1^{\frac{cr+ds}{h}}=[u,v]_1$$

It can be similarly shown that $\Phi(\Psi([u,v]_2)=[u,v]_2$, $\Psi(\Phi(\{u,v\}_1))=\{u,v\}_1$ and $\Psi(\Phi(\{u,v\}_2))=\{u,v\}_2$.

Hence, as the $\{u,v\}_i$ generate $K_2(2,F)$ and the $[u,v]_i$ generate $K_2(2,F)\times\dfrac{K_2(2,F)}{h\langle\{u^2,v\}\,\vert\, u,v\in F^{*}\rangle}$, we get that $\Phi$ and $\Psi$ are inverse homomorphisms. 

So $K_2(A,F)\cong K_2(2,F)\times\dfrac{K_2(2,F)}{h\langle\{u^2,v\}\,\vert\, u,v\in F^{*}\rangle}$.
\end{proof}

\begin{example}
	This finally allows us to compute $K_2(\tilde{A_1},F)$. The matrix of $\tilde{A_1}$ is
	
	$$\tilde{A_1}=\begin{pmatrix}
		2 & -2\\
		-2 & 2\\
	\end{pmatrix}$$
	
	So since $hcf(1,1)=1$, the above gives us that
	
	$$K_2(\tilde{A_1},F)\cong K_2(2,F)\times\dfrac{K_2(2,F)}{\langle\{u^2,v\}\,\vert\, u,v\in F^{*}\rangle}$$
	
	It has been shown by Suslin \cite{TIKF} that in fact 
	
	$$\dfrac{K_2(2,F)}{\langle\{u^2,v\}\,\vert\, u,v\in F^{*}\rangle}\cong I^2(F)$$
	
	where $I^2(F)$ is the fundamental ideal of the Witt ring $W(F)$ of $F$, so we could also write	
	$$K_2(\tilde{A_1},F)\cong K_2(2,F)\times I^2(F)$$
	
\end{example}

\begin{theorem}
	
	In the case where $A=\begin{pmatrix}
	2 & -b\\
	-a & 2\\
	\end{pmatrix}$ with $a$ odd and $b$ even (or equivalently, $a$ even and $b$ odd), we have
	
	$$K_2(A,F)\cong \dfrac{K_2(F)}{hK_2(F)} \times K_2(2,F)$$
	
	where $h=hcf(a,b)$.
	
\end{theorem}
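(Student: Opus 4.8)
The plan is to follow the same strategy as in the preceding ($a,b$ even) theorem: write down explicit presentations of both groups and exhibit two mutually inverse homomorphisms between them.

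First I would apply Theorem \ref{RMThm}. Since $a$ is odd, the first column of $A$ contains the odd entry $-a$, so $L_1=K_2(F)$; since $b$ is even, both entries of the second column are even, so $L_2=K_2(2,F)$. Taking $i=1,j=2$ and using the reformulation of $J$ noted after Theorem \ref{RMThm}, the generating family of $J$ is $\{u^{-a},v\}_1\{u^{-b},v\}_2^{-1}$, which by bimultiplicativity in $K_2(F)$ together with Lemma \ref{rels}(iv) I would rewrite as $\{u,v\}_1^{a}\{u^2,v\}_2^{-d}$ with $d=b/2$; setting it trivial gives the single extra relation $\{u,v\}_1^{a}=\{u^2,v\}_2^{d}$. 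Hence $K_2(A,F)$ can be presented on symbols $\{u,v\}_1$ (satisfying (A1)--(A3)) and cocycles $\{u,v\}_2$ (satisfying (B1)--(B4)) subject to this relation, while in parallel I would present the target $\frac{K_2(F)}{hK_2(F)}\times K_2(2,F)$ on symbols $[u,v]_1$ (satisfying (A1)--(A3) and $[u,v]_1^{h}=1$) and cocycles $[u,v]_2$ (satisfying (B1)--(B4)), noting that because $a$ is odd we have $h=\mathrm{hcf}(a,b)=\mathrm{hcf}(a,d)$.

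Next I would construct the two homomorphisms. Writing $a=ha_0$, $d=hd_0$ with $\mathrm{hcf}(a_0,d_0)=1$ and $a_0$ odd, I would choose an integer $\epsilon$ with $2\epsilon\equiv-1\pmod{a_0}$ (possible precisely because $a_0$ is odd, so $2$ is invertible modulo $a_0$), and set $\Phi$ on generators so that the cocycle $\{u,v\}_2$ is sent to a symbol-twisted copy of $[u,v]_2$ (this is what carries the whole $K_2(2,F)$ factor) while $\{u,v\}_1$ is sent to $[u,v]_1$ times a suitable power of the symbol $[u^2,v]_2=[u,v]_2[v,u]_2^{-1}$. The inverse $\Psi$ is built the same way, sending $[u,v]_2$ back to $\{u,v\}_2$ (up to a symbol twist) and $[u,v]_1$ to a combination of $\{u,v\}_1$ and $\{u^2,v\}_2$ whose $h$-th power is forced trivial by $\{u,v\}_1^{a}=\{u^2,v\}_2^{d}$. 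For well-definedness, relations (A1)--(A3) and (B1)--(B4) are automatic, since every image is a product of symbols and cocycles and, by Lemma \ref{rels}(vii), powers of $[u^2,v]_2$ are again symbols (hence cocycles); the only genuine check is the relation $\{u,v\}_1^{a}=\{u^2,v\}_2^{d}$. On the $\frac{K_2(F)}{hK_2(F)}$ factor this holds automatically because $h\mid a$ and $h\mid d$, so both sides become trivial, and on the $K_2(2,F)$ factor, using $[u^2,v]_2=[u,v]_2[v,u]_2^{-1}$ and $[v^2,u]_2=[u^2,v]_2^{-1}$ (Lemma \ref{rels}(vi)), it collapses to a single identity in the integer exponents, which is exactly what the choice of $\epsilon$ is designed to satisfy. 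I would then verify $\Phi\Psi=\mathrm{id}$ and $\Psi\Phi=\mathrm{id}$ on generators, which is routine exponent bookkeeping once the exponents are fixed.

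The main obstacle, and the reason this case is not a direct copy of the ($a,b$ even) theorem, is the asymmetry between the two factors: here one factor is a symbol group and the other a genuine cocycle group, so the symmetric Bezout construction used there does not port verbatim. Concretely, the image of the bare symbol $\{u,v\}_1$ can only contribute \emph{symbols} to the $K_2(2,F)$ factor (being the image of a skew symbol, it must be skew), whereas the relation pairs it against $\{u^2,v\}_2$, which carries a factor of $2$ coming from Lemma \ref{rels}(vi). Reconciling this factor of $2$ against the exponent $a$ is exactly where the hypothesis that $a$ is odd is used, via the invertibility of $2$ modulo $a/h$; ensuring that the resulting $\Phi$ and $\Psi$ are genuinely two-sided inverses, rather than merely well-defined, is the most delicate part of the bookkeeping.
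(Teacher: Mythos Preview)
Your overall strategy --- present both groups explicitly and construct mutually inverse homomorphisms --- is exactly what the paper does, and your setup via Theorem~\ref{RMThm} is correct. Where you diverge is in the construction of $\Phi$ and $\Psi$, and here you have misdiagnosed the obstacle and overcomplicated matters. The paper runs B\'ezout directly on the pair $(a,b)$, not on $(a,d)$: writing $h=ra+sb$, it sets
\[
\Phi(\{u,v\}_1)=[u^r,v]_1\,[u^{b/h},v]_2,\qquad \Phi(\{u,v\}_2)=[u^{-s},v]_1\,[u^{a/h},v]_2,
\]
and $\Psi([u,v]_1)=\{u^{a/h},v\}_1\{u^{-b/h},v\}_2$, $\Psi([u,v]_2)=\{u^{s},v\}_1\{u^{r},v\}_2$. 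The observation you are missing is that since $a$ is odd, $h=\mathrm{hcf}(a,b)$ is odd, and hence $b/h$ is \emph{even}; by Lemma~\ref{rels}(vii) this makes $(u,v)\mapsto[u^{b/h},v]_2$ and $(u,v)\mapsto\{u^{-b/h},v\}_2$ genuine Steinberg symbols, so the images of the symbol generators are automatically products of symbols. All remaining checks are then the same short integer arithmetic as in the even--even case. Your claim that ``the symmetric B\'ezout construction used there does not port verbatim'' is therefore not really right: it ports with only the cosmetic change of placing exponents inside the bracket rather than outside, and no auxiliary $\epsilon$ with $2\epsilon\equiv-1\pmod{a_0}$ is needed --- the factor of $2$ that worried you is absorbed by the evenness of $b/h$. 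Your route may well be salvageable, but as written the formulas for $\Phi$ and $\Psi$ are left vague (``symbol-twisted copy'', ``suitable power''), and the simplest interpretation --- sending $\{u,v\}_2$ to $[u,v]_2$ itself on the cocycle factor --- forces the unsatisfiable condition $a\mid d$ when you check the relation; so whatever you intend with $\epsilon$ would need to be spelled out in full.
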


\begin{proof}
	Using Theorem \ref{RMThm} and Lemma \ref{rels}(vi) we can write $$K_2(A,F)=\dfrac{K_2(F)\times K_2(2,F)}{J}$$ where $J$ is the (normal) subgroup generated by $\{u^a,v\}_1\{u^{-b},v\}_2$ for all $u,v\in F^{*}$.
	
	In particular, we can present $K_2(A,F)$ as being the abelian group generated by $\{u,v\}_i$ for $u,v\in F^{*}$ and $i=1,2$ with defining relations:
	
	(R1) $\{.,.\}_1$ is a Steinberg symbol
	
	(R2) $\{.,.\}_2$ is a Steinberg cocycle
	
	(R3) $\{u^a,v\}_1=\{u^b,v\}_2$ for all $u,v\in F^{*}$
	
	We can also present  $\dfrac{K_2(F)}{hK_2(F)}\times K_2(2,F)$ as being the abelian group generated by $[u,v]_i$ for $u,v\in F^{*}$ and $i=1,2$ with defining relations:
	
	(S1) $[.,.]_1$ is a Steinberg symbol
	
	(S2) $[.,.]_2$ is a Steinberg cocycle
	
	(S3) $[u^h,v]_1=1$ for all $u,v\in F^{*}$
	
	Suppose that $h=ra+sb$ for $r,s\in \B{Z}$.
	
	Define a map $\Phi:K_2(A,F)\rightarrow \dfrac{K_2(F)}{hK_2(F)}\times K_2(2,F)$ by
	
	$$\Phi(\{u,v\}_1)=[u^r,v]_1[u^{\frac{b}{h}},v]_2$$	
	$$\Phi(\{u,v\}_2)=[u^{-s},v]_1[u^{\frac{a}{h}},v]_2$$
	
	Since $\frac{b}{h}$ is even, Lemma \ref{rels}(vii) tells us that $[u^r,v]_1[u^{\frac{b}{h}},v]_2$ is a Steinberg symbol, and
	Lemma \ref{rels}(vii) also tells is that $[u^{-s},v]_1[u^{\frac{a}{h}},v]_2$ is a Steinberg cocycle.
	
	Furthermore, we have $$\Phi(\{u^a,v\}_1)=[u^{ar},v]_1[u^{\frac{ab}{h}},v]_2=[u^{h},v]_1[u^{-bs},v]_1[u^{\frac{ab}{h}},v]_2=\Phi(\{u^b,v\}_2)$$
	
	Hence $\Phi$ is a well defined homomorphism.
	
	Now, define a map $\Psi:\dfrac{K_2(F)}{hK_2(F)}\times K_2(2,F)\rightarrow K_2(A,F)$ by
	
	$$\Psi([u,v]_1)=\{u^{\frac{a}{h}},v\}_1\{u^{-\frac{b}{h}},v\}_2$$
	$$\Psi([u,v]_2)=\{u^s,v\}_1\{u^r,v\}_2$$
	
	Since $\frac{b}{h}$ is even, Lemma \ref{rels}(vii) tells us that $\{u^{\frac{a}{h}},v\}_1\{u^{-\frac{b}{h}},v\}_2$ is a Steinberg symbol, and
	Lemma \ref{rels}(vii) also tells is that $\{u^s,v\}_1\{u^r,v\}_2$ is a Steinberg cocycle.
	
	Furthermore, we have 	
	$$\Psi([u^h,v]_1)=\{u^a,v\}_1\{u^{-b},v\}_2=1$$
	
	Hence $\Psi$ is a well-defined homomorphism.
	
	Now we note that	
	\begin{equation} 
	\begin{split}
	\Psi(\Phi(\{u,v\}_1)) & = \Psi ([u^r,v]_1 [u^{\frac{b}{h}},v]_2) \\
	& = \{u^{\frac{ra}{h}},v\}_1 \{u^{-\frac{br}{h}},v\}_2 \{u^{\frac{bs}{h}},v\}_1 \{u^{\frac{br}{h}},v\}_2\\
	& = \{u^{\frac{ra+bs}{h}},v\} \:\:\mbox{(By (A1) and (iii))}\\
	& = \{u,v\}_1
	\end{split}
	\end{equation}

	By a similar process, we get that $\Psi(\Phi(\{u,v\}_2))=\{u,v\}_2$, $\Phi(\Psi([u,v]_1))=[u,v]_1$ and $\Phi(\Psi([u,v]_2))=[u,v]_2$.
	
	Hence $\Phi$ and $\Psi$ are inverse homomorphisms and we get
	
	$$K_2(A,F)\cong \dfrac{K_2(F)}{hK_2(F)} \times K_2(2,F)$$
	
	where $h=hcf(a,b)$. 	
	
\end{proof}

\section{Rank 3 Hyperbolic GCMs}

From now on we shall be referring to Chung, Carbone, et.al.'s paper \emph{Classification of Hyperbolic Dynkin Diagrams, Root Lengths 
and Weyl Group Orbits} in our quest to find more concise presentations for $K_2(A,F)$ for the hyperbolic GCMs $A$. In particular, we shall be using the 
numbering of these GCMs (equivalently Dynkin diagrams) as given in this paper, although since $K_2(A,F)$ is unchanged by simultaneously reordering rows and columns of $A$ we may at 
times use a different labelling of vertices of the Dynkin diagrams than those given in the above paper. 

In this paper, the authors list all 238 hyperbolic Dynkin diagrams of rank 3 or higher. Of these, 123 have rank 3.

We have already made some steps towards determining the $K_2(A,F)$ for these $A$; namely in the cases when all the columns of $A$ contain an odd element and most of the cases when $A$ 
contains a column with a -1 as its single non-zero off-diagonal entry. A quick survey of the table in \cite{CHDD} tells us that 67 of the $3\times 3$ GCMs fall into the first of these categories, with another 12 covered by the second category. So we have
 46 remaining cases to cover. We start by considering some classes of $3\times 3$ GCMs.

Class 1:

$A=\begin{pmatrix}
2 & -a & -b\\
-c & 2 & -d\\
-1 & -1 & 2\\
\end{pmatrix}$

where $a,b,c,d\in\B{Z}_{>0}$.

Since we have already covered the case where all columns have an odd element, we only need to treat the case where $b$ and $d$ are even. 

Then Theorem \ref{RMThm} gives us that

$K_2(A,F)=\dfrac{K_2(F)\times K_2(F)\times K_2(2,F)}{J}$

where in this case $J$ corresponds to the following equivalence relation:

For all $u,v\in F^{*}$, we have (taking inverses to remove any negative powers, and using Lemma \ref{rels} (iii) to take indices outside of the symbol)

\begin{itemize}
\item $\{u,v\}_1^c=\{u,v\}_2^a$
\item $\{u,v\}_1=\{u^2,v\}_3^{\frac{b}{2}}$
\item $\{u,v\}_2=\{u^2,v\}_3^{\frac{d}{2}}$
\end{itemize}

From the second and third of these three relations it is clear that we can write $K_2(A,F)$ as a quotient of $K_2(2,F)$ (as the $\{u,v\}_1$ and $\{u,v\}_2$ generate the $K_2(F)$).

In particular, we will have $K_2(A,F)$ being $K_2(2,F)$ quotiented out by the relation

\begin{itemize}
\item $\{u^2,v\}_3^{\frac{bc}{2}}=\{u^2,v\}_3^{\frac{ad}{2}}$
\end{itemize}

for all $u,v\in F^{*}$.

Or equivalently,

$$K_2(A,F)=\dfrac{K_2(2,F)}{\left(\frac{ad-bc}{2}\right)\langle \{u^2,v\}\,\vert u,v\in F^{*}\rangle}$$

This covers another 18 cases, leaving 28 remaining.

Class 2:

$A=\begin{pmatrix}
2 & -1 & -a\\
-b & 2 & -c\\
-1 & -d & 2\\
\end{pmatrix}$

where $a,b,c,d\in\B{Z}_{>0}$.

Again, we only need to treat the case when $a$ and $c$ are both even.

Here, we can apply the idea of Proposition \ref{del} twice in order to get

$$K_2(2,F)=\dfrac{K_2(F)\times K_2(F)\times K_2(2,F)}{J}=\dfrac{K_2(F)\times K_2(2,F)}{J'}=\dfrac{K_2(2,F)}{J''}$$

where $J'$ is generated by $\{u,v\}_2^d\{u^2,v\}_3^{\frac{-c}{2}}$ and $\{u,v\}_2\{u^2,v\}_3^{-\frac{ab}{2}}$ for all $u,v\in F^{*}$, 
and $J''$ is generated by $\{u^2,v\}_3^{\frac{c}{2}}\{u^2,v\}_3^{-\frac{abd}{2}}$ for all $u,v\in F^{*}$, which hence gives us

$$K_2(A,F)=\dfrac{K_2(2,F)}{\left(\frac{abd-c}{2}\right)\langle \{u^2,v\}\,\vert u,v\in F^{*}\rangle}$$

This covers another 8 cases, leaving 20 more.

Class 3:

$A=\begin{pmatrix}
2 & -2 & -2\\
-2 & 2 & -2\\
-2 & -2 & 2\\
\end{pmatrix}$    and   $A=\begin{pmatrix}
2 & -2 & 0\\
-2 & 2 & -2\\
0 & -2 & 2\\
\end{pmatrix}$

It is easy to see that $K_2(A,F)$ will be the same for these two matrices. In this case, we get that

$$K_2(A,F)\cong K_2(2,F)\times \dfrac{K_2(2,F)}{\langle \{u^2,v\} \,\vert \, u,v\in F^{*}\rangle} \times \dfrac{K_2(2,F)}{\langle \{u^2,v\}\,\vert u,v\in F^{*}\rangle}$$
	
The proof of this is very similar to the proofs in the rank 2 cases. Namely, we can easily construct explicit inverse homomorphisms between the two groups by defining them on generators and checking that they are well defined and inverses. The details of this proof are left to the reader.

For the remaining 18 matrices, we shall simply give a table containing the final results. These were all obtained from simple manipulations of the relations in the groups in order to give slightly different presentations, similar to Classes 1 and 2 above, and then constructing explicit isomorphisms.

The numbering in the table follows the numbering in \cite{CHDD}.

\begin{adjustwidth}{-4em}{}
\begin{center}
	\begin{tabular}{ | m{1.5cm} | m{10em}| m{25em} | } 
		\hline
		Number & Matrix $A$ & $K_2(A,F)$ \\ 
		\hline
		27. & $A=\begin{pmatrix}
			2 & -1 & 0\\
			-3 & 2 & -2\\
			0 & -1 & 2\\
		\end{pmatrix}$ & $K_2(2,F)$ \\ 
		\hline
		40. & $A=\begin{pmatrix}
		2 & -1 & -2\\
		-1 & 2 & -2\\
		-2 & -2 & 2\\
		\end{pmatrix}$ & $K_2(F)\times \dfrac{K_2(2,F)}{\langle \{u^2,v\} \,\vert \, u,v\in F^{*}\rangle}$ \\
		\hline
		55. & $A=\begin{pmatrix}
		2 & -1 & -2\\
		-2 & 2 & -2\\
		-2 & -1 & 2\\
		\end{pmatrix}$ & $K_2(2,F)\times \dfrac{K_2(2,F)}{\langle \{u^2,v\} \,\vert \, u,v\in F^{*}\rangle}$ \\
		\hline
		59. & $A=\begin{pmatrix}
		2 & -1 & -2\\
		-2 & 2 & -2\\
		-2 & -2 & 2\\
		\end{pmatrix}$ & $\dfrac{K_2(2,F)}{\langle \{u^2,v\} \,\vert \, u,v\in F^{*}\rangle}\times \dfrac{K_2(2,F)}{\langle \{u^2,v\} \,\vert \, u,v\in F^{*}\rangle}$ \\
		\hline
		63. & $A=\begin{pmatrix}
		2 & -2 & -3\\
		-1 & 2 & -2\\
		-1 & -2 & 2\\
		\end{pmatrix}$ & $\dfrac{K_2(2,F)}{\langle \{u^2,v\} \,\vert \, u,v\in F^{*}\rangle}$ \\
		\hline
		67. & $A=\begin{pmatrix}
		2 & -2 & -4\\
		-1 & 2 & -2\\
		-1 & -2 & 2\\
		\end{pmatrix}$ & $\dfrac{K_2(2,F)}{\langle \{u^2,v\} \,\vert \, u,v\in F^{*}\rangle}\times \dfrac{K_2(2,F)}{\langle \{u^2,v\} \,\vert \, u,v\in F^{*}\rangle}$ \\
		\hline
		81. & $A=\begin{pmatrix}
		2 & -2 & -3\\
		-2 & 2 & -2\\
		-1 & -2 & 2\\
		\end{pmatrix}$ & $\dfrac{K_2(F)}{4K_2(F)}\times \dfrac{K_2(2,F)}{\langle \{u^2,v\} \,\vert \, u,v\in F^{*}\rangle}$ \\
		\hline
		82. & $A=\begin{pmatrix}
		2 & -2 & -4\\
		-2 & 2 & -2\\
		-1 & -2 & 2\\
		\end{pmatrix}$ & $\dfrac{K_2(2,F)}{3\langle \{u^2,v\} \,\vert \, u,v\in F^{*}\rangle}\times \dfrac{K_2(2,F)}{\langle \{u^2,v\} \,\vert \, u,v\in F^{*}\rangle}$ \\
		\hline
		88. & $A=\begin{pmatrix}
		2 & -2 & -1\\
		-2 & 2 & -1\\
		-4 & -3 & 2\\
		\end{pmatrix}$ & $\dfrac{K_2(2,F)}{\langle \{u^2,v\} \,\vert \, u,v\in F^{*}\rangle}$ \\
		\hline
		90. & $A=\begin{pmatrix}
		2 & -1 & -2\\
		-4 & 2 & -4\\
		-2 & -1 & 2\\
		\end{pmatrix}$ & $K_2(2,F)\times \dfrac{K_2(2,F)}{\langle \{u^2,v\} \,\vert \, u,v\in F^{*}\rangle}$ \\
		\hline
	
	\end{tabular}
\end{center}
\begin{center}
	\begin{tabular}{ | m{1.5cm} | m{10em}| m{25em} | } 
		
		\hline
		Number. & Matrix $A$ & $K_2(A,F)$\\
		\hline
		103. & $A=\begin{pmatrix}
		2 & -2 & 0\\
		-2 & 2 & -1\\
		0 & -1 & 2\\
		\end{pmatrix}$ & $K_2(F)\times \dfrac{K_2(2,F)}{\langle \{u^2,v\} \,\vert \, u,v\in F^{*}\rangle}$ \\
		\hline
		106. & $A=\begin{pmatrix}
		2 & -2 & 0\\
		-2 & 2 & -2\\
		0 & -1 & 2\\
		\end{pmatrix}$ & $K_2(2,F)\times \dfrac{K_2(2,F)}{\langle \{u^2,v\} \,\vert \, u,v\in F^{*}\rangle}$ \\
		\hline
		110. & $A=\begin{pmatrix}
		2 & -1 & 0\\
		-4 & 2 & -2\\
		0 & -1 & 2\\
		\end{pmatrix}$ & $K_2(2,F)\times \dfrac{K_2(2,F)}{\langle \{u^2,v\} \,\vert \, u,v\in F^{*}\rangle}$ \\
		\hline
		113. & $A=\begin{pmatrix}
		2 & -2 & 0\\
		-2 & 2 & -1\\
		0 & -3 & 2\\
		\end{pmatrix}$ & $K_2(F)\times \dfrac{K_2(2,F)}{\langle \{u^2,v\} \,\vert \, u,v\in F^{*}\rangle}$ \\
		\hline
		114. & $A=\begin{pmatrix}
		2 & -2 & 0\\
		-2 & 2 & -3\\
		0 & -1 & 2\\
		\end{pmatrix}$ & $K_2(F)\times \dfrac{K_2(2,F)}{\langle \{u^2,v\} \,\vert \, u,v\in F^{*}\rangle}$ \\
		\hline
		116. & $A=\begin{pmatrix}
		2 & -1 & 0\\
		-4 & 2 & -2\\
		0 & -2 & 2\\
		\end{pmatrix}$ & $K_2(2,F)\times \dfrac{K_2(2,F)}{\langle \{u^2,v\} \,\vert \, u,v\in F^{*}\rangle}$ \\
		\hline
		119. & $A=\begin{pmatrix}
		2 & -1 & 0\\
		-3 & 2 & -4\\
		0 & -1 & 2\\
		\end{pmatrix}$ & $K_2(2,F)$ \\
		\hline
		122. & $A=\begin{pmatrix}
		2 & -1 & 0\\
		-4 & 2 & -4\\
		0 & -1 & 2\\
		\end{pmatrix}$ & $K_2(2,F)\times \dfrac{K_2(2,F)}{2\langle \{u^2,v\} \,\vert \, u,v\in F^{*}\rangle}$ \\ 
		\hline
	\end{tabular}
\end{center}
\end{adjustwidth}

\section{Rank 4 and Higher Hyperbolic GCMs}

Again, we use \cite{CHDD} to identify the hyperbolic GCMS. As before, we know how to compute $K_2(A,F)$ when $A$ is simply laced, or has 
an odd entry in each column, or in most cases when it has a column containing a -1 as its only non-zero off-diagonal entry. A quick survey of the table in \cite{CHDD} tells us that these techniques are
in fact enough to compute $K_2(A,F)$ (or at least reduce it to a smaller (affine or finite) case) for all of the rank 7 or higher GCMs. Furthermore, there is only two rank 5 
GCMs (No. 179 and No. 197) and one rank 6 GCM (No. 215) which can't be computed this way.

For the rank 4 GCMs, there are 11 (out of 52) which cannot be computed using these techniques. We present $K_2(A,F)$ for all these remaining GCMs in a table as we did for some of the rank 3 cases, and they were computed in a similar way.

\begin{adjustwidth}{-4em}{}
\begin{center}
	\begin{tabular}{ | m{1.5cm} | m{12em}| m{25em} | } 
		\hline
		Number. & Matrix $A$ & $K_2(A,F)$\\
		\hline
		128. & $A=\begin{pmatrix}
		2 & -1 & -1 & 0\\
		-1 & 2 & -1 & 0\\
		-1 & -1 & 2 & -2\\
		0 & 0 & -1 & 2 \\
		\end{pmatrix}$ & $K_2(2,F)$ \\
		\hline
		135. & $A=\begin{pmatrix}
		2 & -1 & 0 & -1\\
		-1 & 2 & -2 & 0\\
		0 & -1 & 2 & -1\\
		-1 & 0 & -2 & 2 \\
		\end{pmatrix}$ & $K_2(2,F)$ \\
		\hline
		143. & $A=\begin{pmatrix}
		2 & -1 & 0 & -2\\
		-1 & 2 & -1 & 0\\
		0 & -2 & 2 & -2\\
		-1 & 0 & -1 & 2 \\
		\end{pmatrix}$ & $\dfrac{K_2(2,F)}{\langle \{u^2,v\} \,\vert \, u,v\in F^{*}\rangle}$ \\
		\hline
		144. & $A=\begin{pmatrix}
		2 & -1 & 0 & -2\\
		-1 & 2 & -2 & 0\\
		0 & -1 & 2 & -2\\
		-1 & 0 & -1 & 2 \\
		\end{pmatrix}$ & $\dfrac{K_2(2,F)}{\langle \{u^2,v\} \,\vert \, u,v\in F^{*}\rangle}$ \\
		\hline
		146. & $A=\begin{pmatrix}
		2 & -1 & 0 & -2\\
		-2 & 2 & -2 & 0\\
		0 & -1 & 2 & -2\\
		-1 & 0 & -1 & 2 \\
		\end{pmatrix}$ & $K_2(2,F)$ \\
		\hline
		147. & $A=\begin{pmatrix}
		2 & -1 & 0 & -2\\
		-2 & 2 & -1 & 0\\
		0 & -2 & 2 & -2\\
		-1 & 0 & -1 & 2 \\
		\end{pmatrix}$ & $\dfrac{K_2(2,F)}{3\langle \{u^2,v\} \,\vert \, u,v\in F^{*}\rangle}$ \\
		\hline
		148. & $A=\begin{pmatrix}
		2 & -2 & 0 & -2\\
		-1 & 2 & -1 & 0\\
		0 & -2 & 2 & -2\\
		-1 & 0 & -1 & 2 \\
		\end{pmatrix}$ & $K_2(2,F)\times \dfrac{K_2(2,F)}{\langle \{u^2,v\} \,\vert \, u,v\in F^{*}\rangle}$ \\
		\hline
		156. & $A=\begin{pmatrix}
		2 & 0 & 0 & -1\\
		0 & 2 & 0 & -1\\
		0 & 0 & 2 & -1\\
		-2 & -2 & -2 & 2 \\
		\end{pmatrix}$ & $K_2(2,F)\times \dfrac{K_2(2,F)}{\langle \{u^2,v\} \,\vert \, u,v\in F^{*}\rangle}\times \dfrac{K_2(2,F)}{\langle \{u^2,v\} \,\vert \, u,v\in F^{*}\rangle}$ \\
		\hline
		162. & $A=\begin{pmatrix}
		2 & -1 & 0 & 0\\
		-1 & 2 & -1 & 0\\
		0 & -2 & 2 & -2\\
		0 & 0 & -1 & 2 \\
		\end{pmatrix}$ & $K_2(F)\times \dfrac{K_2(2,F)}{\langle \{u^2,v\} \,\vert \, u,v\in F^{*}\rangle}$\\
		\hline
		168. & $A=\begin{pmatrix}
		2 & -1 & 0 & 0\\
		-2 & 2 & -1 & 0\\
		0 & -1 & 2 & -3\\
		0 & 0 & -1 & 2 \\
		\end{pmatrix}$ & $K_2(2,F)$ \\
		\hline
	\end{tabular}
\end{center}
	\begin{center}
		\begin{tabular}{ | m{1.5cm} | m{12em}| m{25em} | } 
			\hline
			Number. & Matrix $A$ & $K_2(A,F)$\\
			\hline
			174. & $A=\begin{pmatrix}
			2 & -1 & 0 & 0\\
			-2 & 2 & -2 & 0\\
			0 & -1 & 2 & -2\\
			0 & 0 & -1 & 2 \\
			\end{pmatrix}$ & $K_2(2,F)\times \dfrac{K_2(2,F)}{\langle \{u^2,v\} \,\vert \, u,v\in F^{*}\rangle}$ \\
			\hline
	\end{tabular}
\end{center}	
\begin{center}
	\begin{tabular}{ | m{1.5cm} | m{17em}| m{20em} | } 
		\hline
		Number. & Matrix $A$ & $K_2(A,F)$\\
		\hline
		179. & $A=\begin{pmatrix}
		2 & -1 & -1 & 0 & 0\\
		-1 & 2 & 0 & -1 & 0\\
		-1 & 0 & 2 & -1 & 0\\
		0 & -1 & -1 & 2 & -2\\
		0 & 0 & 0 & -1 & 2\\
		\end{pmatrix}$ & $K_2(2,F)$\\
		\hline
		197. & $A=\begin{pmatrix}
		2 & -1 & 0 & 0 & 0\\
		-1 & 2 & -1 & 0 & 0\\
		0 & -2 & 2 & -1 & 0\\
		0 & 0 & -1 & 2 & -2\\
		0 & 0 & 0 & -1 & 2\\
		\end{pmatrix}$ & $K_2(F)\times \dfrac{K_2(2,F)}{\langle \{u^2,v\} \,\vert \, u,v\in F^{*}\rangle}$\\
		\hline
	\end{tabular}
\end{center}	
\begin{center}
	\begin{tabular}{ | m{1.5cm} | m{17em}| m{20em} | } 
		\hline
		Number. & Matrix $A$ & $K_2(A,F)$\\
		\hline
		215. & $A=\begin{pmatrix}
		2 & -1 & 0 & 0 & 0 & 0\\
		-1 & 2 & -1 & 0 & 0 & 0\\
		0 & -2 & 2 & -1 & 0 & 0\\
		0 & 0 & -1 & 2 & -1 & 0\\
		0 & 0 & 0 & -1 & 2 & -2\\
		0 & 0 & 0 & 0 & -1 & 2\\
		\end{pmatrix}$ & $K_2(F)\times \dfrac{K_2(2,F)}{\langle \{u^2,v\} \,\vert \, u,v\in F^{*}\rangle}$ \\
		\hline
	\end{tabular}
\end{center}
\end{adjustwidth}

\section{Arbitrary GCMs}

Throughout this paper, we have been focusing on the hyperbolic GCMs. However, the vast majority of indefinite GCMs are not of this type, so one may naturally ask how to go about computing $K_2(A,F)$ for an arbitrary GCM $A$. In this section, we shall survey the aspects of this paper which can be applied or generalised to this broader case.

The first point worth making is that in Sections 2--5 of this paper the hyperbolicity of the GCM is not used. Hence, the results proved in these sections can be applied equally well to any GCM. Furthermore, since every rank 2 GCM is finite, affine or hyperbolic, there is no specific requirement of hyperbolicity in Section 6 either.

In Sections 7 and 8 we used the classification of the hyperbolic GCMs as in \cite{CHDD} in order to reduce the number of remaining GCMs we wanted to consider to 60, many of which could be dealt with by hand or grouped together into similar cases. This is where the assumption of hyperbolicity was used most substantially.

So what benefits did the assumption of hyperbolicity provide us? The answer is two-fold. Firstly, the majority of hyperbolic GCMs could be handled using the methods of Sections 2--5. Secondly, those which couldn't tended to be of low rank or had a significant number of 0's and -1's as entries. This enabled us to apply the idea of Proposition \ref{del} in order to reduce $K_2(A,F)$ to a more manageable form - in particular, for almost all of the remaining GCMs we could reduce to $$K_2(A,F)=\frac{L_1\times L_2}{J}$$ where $L_1,L_2\in\{K_2(F),K_2(2,F)\}$ and $J=\langle \{u^a,v\}_1\{u^b,v\}_2^{-1}, \{u^{c_i},v\}_i\,\vert\,u,v\in F^{*}, i=1,2\rangle$ for some $a,b,c_1,c_2\in\B{Z}$. This form allowed us to use a slight variation of the proofs in Section 6 in order to construct isomorphisms to write $K_2(A,F)$ in a simpler way. (In fact, the only GCMs where we couldn't reduce to this form we could reduce to something that was clearly gave the same $K_2(A,F)$ as in Class 3 in Section 7).

For an arbitrary GCM we may still be able to apply the ideas of Proposition \ref{del} but the reduced form will in general remain somewhat complicated. It may be possible to construct an explicit isomorphism in these cases, if one has an idea of what the simplified presentation might look like.

This lead us to the following conjecture.

\begin{conjecture}
	Suppose that $A$ is an $n\times n$ GCM with the columns ordered such that the first $k$ columns all contain an odd entry, and the remaining columns have all even entries, for $k\in\{0,1,\ldots,n\}$. Let G be the abelian group defined by $G=\langle x_1,\ldots,x_n\,\vert\, x_i^{a_{ji}}=x_j^{a_{ij}}, [x_i,x_j]=1\; \mbox{for}\; 1\leq i<j\leq n\rangle$.
	
	Suppose further that $G\cong \langle y_1,\ldots,y_n\,\vert\,y_i^{r_i}=1\, [y_i,y_j]=1 \mbox{for}\,1\leq i\leq j\leq n\rangle$, where $r_1,\ldots,r_n\in\B{N}\cup\{0\}$ and the isomorphism is denoted by $\phi$ with inverse $\psi$.
	
	Let $\phi(x_i)=y_1^{\mu_{1i}}\ldots y_n^{\mu_{ni}}$ for $1\leq i\leq n$ and
	$\psi(y_i)=x_1^{\nu_{1i}}\ldots x_n^{\nu_{ni}}$ for $1\leq i\leq n$, where $\mu_{ji},\nu_{ji}\in\B{Z}$.
	
	Define $$L_i=\twopartdef  {K_2(F)} {\nu_{ji}\, \mbox{is even for all}\,\,\, k<j\leq n \,\,\,
		\mbox{and}\,\,\mu_{ij}\, \mbox{is odd for some}\,\,\, 1\leq j\leq k} {K_2(2,F)} {\mbox{not}}$$
	
	Define $$J_i=\twopartdef {r_iK_2(F)} { L_i=K_2(F)} {\frac{r_i}{2}\langle\{u^2,v\}\,\vert\,u,v\in F^{*}\rangle} { L_i=K_2(2,F)}$$
	
	Then $$K_2(A,F)\cong \frac{L_1}{J_1}\times\ldots\times\frac{L_n}{J_n}$$
\end{conjecture}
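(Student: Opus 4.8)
The plan is to follow the template established in the proof of Theorem \ref{odd}, combined with the symbol-versus-cocycle bookkeeping used in the rank $2$ theorems of Section 6. First I would invoke Theorem \ref{RMThm} to write $K_2(A,F)=(M_1\times\ldots\times M_n)/J$, where (to avoid clashing with the $L_i$ of the statement) $M_j=K_2(F)$ for the odd columns $1\le j\le k$ and $M_j=K_2(2,F)$ for the even columns $k<j\le n$, and then present this quotient by generators $\{u,v\}_j$ subject to the Steinberg-symbol relations on the odd components, the Steinberg-cocycle relations on the even components, and the relations $\{u^{a_{\ell j}},v\}_j=\{u^{a_{j\ell}},v\}_\ell$ coming from $J$ (using Lemma \ref{rels}(iii),(vi)). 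I would present the target $\prod_i L_i/J_i$ analogously by generators $[u,v]_i$. The candidate maps are exactly the ``change of basis'' maps dictated by $\phi$ and $\psi$, namely $\Phi(\{u,v\}_j)=\prod_i [u,v]_i^{\mu_{ij}}$ and $\Psi([u,v]_i)=\prod_j \{u,v\}_j^{\nu_{ji}}$, mirroring $\phi(x_j)=\prod_i y_i^{\mu_{ij}}$ and $\psi(y_i)=\prod_j x_j^{\nu_{ji}}$.

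The genuinely new ingredient, absent in Theorem \ref{odd} where every component was $K_2(F)$, is Lemma \ref{rels}(vii): a Steinberg cocycle raised to an \emph{even} power is a Steinberg symbol, whereas an odd power remains only a cocycle. This is what motivates the parity conditions in the definition of $L_i$. Concretely, when checking that $\Psi$ is well defined I would observe that $\Psi([u,v]_i)=\prod_j\{u,v\}_j^{\nu_{ji}}$ is a Steinberg symbol precisely when every cocycle factor carries an even exponent, i.e. when $\nu_{ji}$ is even for all even columns $k<j\le n$, which is the first clause defining $L_i=K_2(F)$. Dually, for $\Phi$ to send the symbol $\{u,v\}_j$ (for an odd column $j\le k$) to a symbol, every cocycle-component factor $[u,v]_i^{\mu_{ij}}$ must carry an even exponent; matching this requirement against the second clause is where one must check that the definition of $L_i$ is simultaneously compatible with both maps. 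The remaining relations (B1)--(B4) and bimultiplicativity then pass through because everything is abelian and Lemma \ref{rels}(iii),(v) allow even exponents to be moved freely through the symbols.

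Next I would handle the torsion. For $L_i=K_2(F)$ the relation to verify is $\prod_j\{u,v\}_j^{r_i\nu_{ji}}=1$, which follows from $y_i^{r_i}=1$ in $G$ exactly as in Theorem \ref{odd}. For $L_i=K_2(2,F)$ the subgroup $J_i=\frac{r_i}{2}\langle\{u^2,v\}\rangle$ only makes sense once I have shown that $r_i$ is \emph{even} in this case; I expect this to fall out of a parity analysis of the relation lattice of $G$, reflecting the fact that torsion attached to an even column can only be detected through squares. Granting this, $[u^2,v]_i^{r_i/2}=1$ transports to $\prod_j\{u^2,v\}_j^{(r_i/2)\nu_{ji}}=1$ via $y_i^{r_i}=1$ together with Lemma \ref{rels}(vi). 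The proof that $\Phi$ and $\Psi$ are mutually inverse is then formally identical to Theorem \ref{odd}: the identities $\psi\phi=\mathrm{id}$ and $\phi\psi=\mathrm{id}$ in $G$ yield sum-of-products relations among the $\mu_{ji},\nu_{ji}$ which, transported into the $K_2$ groups, collapse $\Psi\Phi(\{u,v\}_j)$ to $\{u,v\}_j$ and $\Phi\Psi([u,v]_i)$ to $[u,v]_i$.

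The hard part, and presumably the reason this is stated only as a conjecture, is establishing that the two families of parity conditions are \emph{simultaneously consistent} and independent of the chosen isomorphism $\phi$. Because squaring is the only mechanism converting a cocycle into a symbol, an arbitrary unimodular change of basis (which is all that an isomorphism $G\cong\B{Z}/r_1\B{Z}\times\ldots\times\B{Z}^{n-s}$ amounts to) need not respect the $\B{Z}/2$ ``type'' data cleanly, unlike the torsion-free bookkeeping of Theorem \ref{odd}; in particular one can worry about a generator with $L_i=K_2(2,F)$ for which some $\mu_{ij}$ with $j\le k$ is nonetheless odd, which would break symbol-preservation of $\Phi$. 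I therefore expect the crux to be a lemma showing that the parities $\{\mu_{ij}\bmod 2,\ \nu_{ji}\bmod 2,\ r_i\bmod 2\}$ are forced to be compatible --- using that the integer matrices $(\mu_{ij})$ and $(\nu_{ji})$ are mutually inverse, so their reductions are linked --- perhaps by choosing $\phi$ adapted to the $\B{Z}/2$-reduction of the relation lattice. Verifying this parity compatibility, rather than carrying out the formal inverse-map computation, is where the real work lies.
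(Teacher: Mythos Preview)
The paper does not contain a proof of this statement: it is explicitly stated as a \emph{conjecture}, and immediately after stating it the author writes that ``various technical issues appear when trying to prove this conjecture in the same manner as Theorem \ref{odd}, which we have thus far been unable to overcome.'' The paper only goes so far as to \emph{suggest} the candidate isomorphism and its inverse, namely
\[
\Phi(\{u,v\}_i)=[u^{\lvert\mu_{1i}\rvert},v]_1^{\,\mathrm{sign}(\mu_{1i})}\cdots[u^{\lvert\mu_{ni}\rvert},v]_n^{\,\mathrm{sign}(\mu_{ni})},
\qquad
\Psi([u,v]_i)=\{u^{\lvert\nu_{1i}\rvert},v\}_1^{\,\mathrm{sign}(\nu_{1i})}\cdots\{u^{\lvert\nu_{ni}\rvert},v\}_n^{\,\mathrm{sign}(\nu_{ni})},
\]
and recommends checking these directly for any specific GCM.

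Your proposal is therefore not so much a competing proof as an articulation of the same strategy the paper has in mind, and you have correctly located the obstruction. Two remarks are worth making. First, your candidate maps $\Phi(\{u,v\}_j)=\prod_i[u,v]_i^{\mu_{ij}}$ differ from the paper's suggestion: in a $K_2(2,F)$ component, $[u,v]^{\mu}$ is not the same as $[u^{\lvert\mu\rvert},v]^{\mathrm{sign}(\mu)}$, and it is the latter form that interacts cleanly with Lemma \ref{rels}(vii). You should move the exponent inside the symbol as the paper does. Second, the difficulty you isolate in your final paragraph --- that the parity data $\mu_{ij}\bmod 2$, $\nu_{ji}\bmod 2$, $r_i\bmod 2$ must be simultaneously consistent and that this appears to depend on the particular choice of isomorphism $\phi$ --- is precisely the ``detailed understanding of how the Fundamental Theorem of Finitely Generated Abelian Groups would give isomorphisms'' that the paper says it could not supply. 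So you have matched the paper's heuristic and correctly diagnosed why it remains a conjecture; what you have not done, and what the paper has not done either, is close that gap.
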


Various technical issues appear when trying to prove this conjecture in the same manner as Theorem \ref{odd}, which we have thus far been unable to overcome - in particular, it appears to require a detailed understanding of how the Fundamental Theorem of Finitely Generated Abelian Groups would give isomorphisms when $G$ is of the above form. However, given a specific GCM $A$ it should be possible to check this conjecture directly - we suggest that the correct isomorphism is $\Phi:K_2(A,F)\rightarrow \frac{L_1}{J_1}\times\ldots\times\frac{L_n}{J_n}$ defined by
$$\Phi(\{u,v\}_i)=[u^{\lvert\mu_{1i}\rvert},v]_1^{sign(\mu_{1i})}\ldots[u^{\lvert\mu_{ni}\rvert},v]_n^{sign(\mu_{ni})}$$ 

for $1\leq i\leq n$, where by $[.,.]_i$ we mean the Steinberg symbol/cocycle for the ith component of the direct product, and that its inverse is $\Psi:\frac{L_1}{J_1}\times\ldots\times\frac{L_n}{J_n}\rightarrow K_2(A,F)$ defined by 
$$\Psi([u,v]_i)=\{u^{\lvert\nu_{1i}\rvert},v\}_1^{sign(\nu_{1i})}\ldots\{u^{\lvert\nu_{ni}\rvert},v\}_n^{sign(\nu_{ni})}$$

If this conjecture - or a slightly modified one - is correct, this would allow us to simplify $K_2(A,F)$ for any GCM $A$, not just the hyperbolic ones.
\section{Acknowledgements}

This paper was written as part of the MA4K9 Project module during the 4th year of my undergraduate Mathematics degree at the University of Warwick. The supervisor for this project was Dr. Dmitriy Rumynin, to whom I give immense thanks for suggesting this project to me and for his continued assistance in its execution. I would also like to thank Dr. Inna Capdeboscq for some useful comments made regarding this paper.

This research did not receive any specific grant from funding agencies in the public, commercial, or not-for-profit sectors.

\section{References}

\section{Appendix}
Through the various chapters in this paper, we have been able to compute $K_2(A,F)$ for all the hyperbolic GCMs $A$. The results are summarised in the following table.

\begin{adjustwidth}{-5em}{}
\begin{center}
	\begin{tabular}{ | m{10em} | m{15em}| m{17em} | } 
		\hline
		Description of $A$. & Notation & $K_2(A,F)$\\
		\hline
		$A=\begin{pmatrix}
		2 & -b \\
		-a & 2\\
		\end{pmatrix}$ & $a,b$ even, $h=hcf(\frac{a}{2},\frac{b}{2})$ & $K_2(2,F)\times \dfrac{K_2(2,F)}{h\langle \{u^2,v\} \,\vert \, u,v\in F^{*}\rangle}$ \\
		\hline
		$A=\begin{pmatrix}
		2 & -b \\
		-a & 2\\
		\end{pmatrix}$ & $a$ even, $b$ odd, $h=hcf(a,b)$ & $K_2(2,F)\times \dfrac{K_2(F)}{hK_2(F)}$ \\
		\hline
		$A=\begin{pmatrix}
		2 & -b \\
		-a & 2\\
		\end{pmatrix}$ & $a,b$ odd, $h=hcf(a,b)$
		& $K_2(F)\times \dfrac{K_2(F)}{hK_2(F)}$ \\
		\hline
		$A$ simply-laced & - & $K_2(F)$ \\
		\hline
		All columns of $A$ have an odd entry & $\langle x_1,\ldots,x_n \,\vert\;\; x_i^{a_{ji}}=x_j^{a_{ij}}, \mbox{abel.}\rangle\cong \B{Z}/r_1\B{Z}\times\ldots\B{Z}/r_s\B{Z}\times \B{Z}^{n-s}$ & $\dfrac{K_2(F)}{r_1K_2(F)}\times\ldots\times \dfrac{K_2(F)}{r_sK_2(F)}\times K_2(F)^{n-s}$ \\
		\hline
		$a_{st}=-1$ only non-zero off-diagonal entry in $t$-th column, $a_{ts}$  not only odd entry in $s$-th column  & $A'$ finite or affine GCM obtained from $A$ by deleting $t$-th row and column & $K_2(A',F)$ \\
		\hline
		$A=\begin{pmatrix}
		2 & -a & -b\\
		-c & 2 & -d\\
		-1 & -1 & 2\\
		\end{pmatrix}$ & $b,d$ even & $\dfrac{K_2(2,F)}{(\frac{ad-bc}{2})\langle \{u^2,v\} \,\vert \, u,v\in F^{*}\rangle}$ \\
		\hline
		$A=\begin{pmatrix}
		2 & -1 & -a\\
		-b & 2 & -c\\
		-1 & -d & 2\\
		\end{pmatrix}$ & $a,c$ even & $\dfrac{K_2(2,F)}{(\frac{abd-c}{2})\langle \{u^2,v\} \,\vert \, u,v\in F^{*}\rangle}$ \\
		\hline
		Remaining hyperbolic $A$ & - & Can be found in tables in Chapters 7 and 8 \\
		\hline
	\end{tabular}
\end{center}
\end{adjustwidth}
\end{document}